\newtheorem{theorem}{Theorem}[section]
\newtheorem{proposition}[theorem]{Proposition}
\newtheorem{corollary}[theorem]{Corollary}
\newtheorem{question}[theorem]{Question}
\theoremstyle{definition}
\newtheorem{remark}[theorem]{Remark}
\begin{document}

\title[Nowicki's conjecture]
{On Nowicki's conjecture: a survey and a new result}
\author[Lucio Centrone, Andre Dushimirimana, {\c S}ehmus F{\i}nd{\i}k]
{Lucio Centrone, Andre Dushimirimana, {\c S}ehmus F{\i}nd{\i}k}
\address{Dipartimento di Matematica, Università degli Studi di Bari "Aldo Moro", via E. Orabona 4, 70125, Bari, Italy}
\address{IMECC, Universidade Estadual de Campinas,
Rua Sergio Buarque de Holanda 651, 13083-859,
Campinas (SP), Brazil}
\email{centrone@unicamp.br}
\address{Department of Mathematics,
\c{C}ukurova University, 01330 Balcal\i,
 Adana, Turkey}
\email{duandosh@yahoo.fr}
\email{sfindik@cu.edu.tr}

\thanks
{The second named author is partially supported by T\"UB\.ITAK}

\subjclass[2020]{17B01; 17B30; 16S15}
\keywords{Lie algebras, Poisson algebras, the Nowicki conjecture}

\begin{abstract}
The goal of the paper is twofold: it aims to give an extensive set of tools and bibliography towards Nowicki's conjecture both in an associative setting; it establishes a new result about Nowicki's conjecture for the free metabelian Poisson algebra. 
\end{abstract}

\maketitle

\section{Introduction}
In this paper we will handle Nowicki's conjecture from its classical formulation to to some related problems. Our intent is producing a survey toward this theme and adding a new brick concerning Nowicki's conjecture in a Poisson algebra setting.

Our first aim is introducing the so-called Weitzenb\"ock derivations. We start considering a field $K$ of characteristic zero and a finite set of variables $X_m=\{x_1,\ldots,x_m\}$; we denote by $KX_m$ the vector space over $K$ with basis $X_m$ and we shall denote by $K[X_m]$ the (commutative) unitary polynomial algebra generated by $X_m$. Let $\delta$ be a non-zero nilpotent linear map of $KX_m$ that can be extended to a derivation of $K[X_m]$, then $\delta$ is called \textit{Weitzenb\"ock}. This name is due to the classical result of Weitzenb\"ock \cite{W}, dating back to 1932, which states that the algebra $K[X_m]^{\delta}=\text{ker}(\delta)$ of constants of the derivation $\delta$ in the algebra $K[X_m]$ is
finitely generated.
If $\delta$ is a Weitzenb\"ock derivation, then it is locally nilpotent and the linear map
$\text{exp}(\delta)$ acting on the vector space $KX_m$
is unipotent. Hence the algebra $K[X_m]^{\delta}$ of constants of $\delta$ is equal to the algebra of invariants 
\[K[X_m]^{\text{exp}(\delta)}=\{x\in K[X_m]\mid \text{exp}(\delta)(x)=x\}.\]
The algebra of invariants of $\text{exp}(\delta)$ is equal to the algebra of invariants $K[X_m]^{G}$, where the group $G$ consists of all elements $\text{exp}(c\delta)$, $c\in K$.
As an abstract group, the group $G$ is isomorphic to the unitriangular group consisting of all $2\times 2$ upper triangular matrices with 1's on the diagonal.
The group $G$ can be embedded into the general linear group $GL_m(K)$ via the representation corresponding
to the Jordan normal form of $\delta$.
Thus we are allowed to study the algebra of constants $K[X_m]^{\delta}$ with the methods of the classical invariant theory.
The computational aspects of algebras of constants and invariant theory are depicted in part in the books by Nowicki \cite{N}, Derksen and Kemper \cite{DK},
and Sturmfels \cite{St}.

We can say more about Weitzenb\"ock derivations. For instance, take a Weitzenb\"ock derivation, then its Jordan normal form consists of Jordan cells with zero diagonals.
Certainly, Weitzenb\"ock derivations are in one-to-one correspondence with the partitions of $m$, up to a linear change of variables. This means there are essentially finite number of  Weitzenb\"ock derivations for a fixed dimension $m$.
In particular, let $m=2d$, $d\geq1$, and assume that the Jordan form of $\delta$ contains
the Jordan cells of size $2\times2$ only, i.e.,
\[
J(\delta)=\left(\begin{matrix}
0&1&\cdots&0&0\\
0&0&\cdots&0&0\\
\vdots&\vdots&\ddots&\vdots&\vdots\\
0&0&\cdots&0&1\\
0&0&\cdots&0&0
\end{matrix}\right).
\]
We assume that $\delta(x_{2j})=x_{2j-1}$, $\delta(x_{2j-1})=0$, $j=1,\ldots,d$.
Nowicki conjectured in his book \cite{N} (Section 6.9, page 76) that in this case the algebra $K[X_{2d}]^{\delta}$
is generated by $x_1,x_3.\ldots,x_{2d-1}$, and $x_{2i-1}x_{2j}-x_{2i}x_{2j-1}$, $1\leq i<j\leq d$. The last conjecture will be called throughout the text \textit{classical Nowicki's conjecture}.

Nowicki's conjecture was proved by several authors with different techniques.
The first published proof appeared in 2004 by Khoury \cite{K1} in his PhD thesis,
followed by his paper \cite{K2}, where he makes use of a computational approach involving Gr\"obner basis techniques. Derksen and Panyushev applied ideas of classical invariant
theory in order to prove the Nowicki conjecture but their proof remained unpublished. Later in 2009, 
Drensky and Makar-Limanov \cite{DML} confirmed the conjecture
by an elementary proof from undergraduate algebra, without involving invariant theory. In this paper we shall highlight the key points of the proof given by Drensky and Makar-Limanov. Moreover, another simple short proof was given by Kuroda \cite{Kuroda} in 2009, too.
Bedratyuk \cite{Bed} proved the Nowicki conjecture by reducing it to a well known problem of classical invariant theory. We were able to find three more proofs of Nowicki's conjecture: two of them are cited in the paper by Kuroda \cite{Kuroda}, and the latest proof was given by Drensky in \cite{D2}.

In this paper we would like to attempt to give a complete account on the state of the art of Nowicki's conjecture presenting a sketch of the proof obtained by Drensky and Makar-Limanov. We will also treat a bit a generalization of Nowicki's conjecture (see Sections 3 and 4 in the text). We will also focus on recent developments of the following problem, strictly related to the classical Nowicki's conjecture, but with a scent of theory of algebras with polynomial identities (PI-algebras). Let us explain briefly our framework: let $\delta$ be a Weitzenb\"ock derivation of the free algebra $F_{m}(\mathfrak V)$ of rank $m$ in a given variety of algebras $\mathfrak V$ (or relatively free algebra of rank $m$ of $\mathfrak{V}$), then \[\textit{give an explicit set of generators of the algebra of constants $F_{m}^{\delta}(\mathfrak V)$.}\]
In \cite{DG} Drensky and Gupta studied Weitzenb\"ock derivations $\delta$ acting on $F_m(\mathfrak{V})$ proving that if the algebra $UT_2(K)$ of $2\times 2$ upper triangular matrices over a field $K$ of characteristic zero belongs to a variety $\mathfrak{V}$, then $F_m^\delta(\mathfrak{V})$ is not finitely generated whereas if $UT_2(K)$ does not belong to $\mathfrak{V}$, then $F_m^\delta(\mathfrak{V})$ is finitely generated by a result of Drensky in \cite{D1}.
Recently, Dangovski et al. \cite{DDF,DDF1} gave some new results in this direction. They showed that the algebra of constants $(F_d'(\mathfrak{V}))^\delta$ in the commutator ideal $F_d'(\mathfrak{V})$ of $F_d(\mathfrak{V})$
is a finitely generated module of $K[X_d]^{\delta}$ and $K[U_d,V_d]^{\delta}$, respectively.
Here $\delta$ acts on $U_d$ and $V_d$ in the same way as on $X_d$. In \cite{CF} the authors give an explicit set of generators of the algebra of constants of the variety generated by the infinite dimensional Grassmann algebra and of the free metabelian associative algebra. A good part of those result is presented explicitly in the text (Sections 5 and 6).

In the second part of the paper (Section 7) we will show a substantially new result. We consider the free metabelian Poisson algebra $P_{2n}$ of rank $2n$ over the field $K$ and we give out an explicit set of generators of its algebra of constants under the action of a Weitzenb\"ock derivation (see Theorems \ref{KB_3} and \ref{KB_4} in the text) as a vector space. 

We would like to stress the fact that in the study of varieties of (not necessarily associative) algebras over a field of characteristic zero, the metabelian identities play a crucial role. It is well known the identities of $UT_2(K)$ follow from the metabelian identity of order 4, so every variety $\mathfrak{V}$ contains $UT_2(K)$ or satisfies the Engel identity $[x_2,x_1,\ldots,x_1]\equiv0$.

 \section{Preliminaries}
 Let $K$ be a field of characteristic 0. 
We consider the commutative and unitary polynomial algebra $K[X_m]$ with generating set $X_m=\{x_1,\ldots,x_m\}$, $m\geq2$ over $K$. Of course, $K[X_m]$ is the free algebra of rank
$m$ in the variety of unitary commutative algebras over $K$. Now we consider the general linear group $GL_m(K)$ and $H\leq GL_m(K)$ a subgroup. We say a polynomial $p\in K[X_m]$ is $H$-\textit{invariant} if it is preserved under the action of each element of $H$.  The vector space $K[X_m]^H$ of all $H$-invariants turns out to be an algebra that is called the algebra of $H$-invariants.

The question whether the algebra $K[X_n]^H$ of invariants is finitely generated for every subgroup $H$ of $GL_m(K)$ is a special case of the \textbf{Hilbert's fourteenth problem} suggested by the German mathematician David Hilbert in 1900 at the International Congress of Mathematicians in Paris.

Although the answer to Hilbert's question turned out to be negative in general, as showed by Nagata in 1958, some remarkable affirmative cases have been handled as well.
Among them is the approach of Weitzenb\"ock, where he considered the \textbf{locally nilpotent} linear derivations $\delta$ of the algebra $K[X_m]$.
Then the kernel $\text{\rm ker}(\delta)=K[X_m]^{\delta}$ of the derivation $\delta$ is an algebra called the \textit{algebra of constants} of $K[X_m]$. He showed that the algebra $K[X_m]^{\delta}$ is finitely generated as an algebra and is equal to the algebra $K[X_m]^{UT_2(K)}$ of invariants of 
the unitriangular group \[{UT_2(K)}=\{\text{exp}(c\delta)\mid c\in K\},\] as already said in the introduction.

The next question is \[\textit{What is the explicit form of the generators of $K[X_n]^\delta$?}\] 
Nowicki conjectured in his book that the algebra $K[X_m,Y_m]^{\delta}$
is generated by \[\text{$x_1,\ldots,x_m$ and $u_{i,j}:=x_iy_j-x_jy_i$, $1\leq i<j\leq m$,}\]
assuming that $\delta(y_{i})=x_{i}$, $\delta(x_{i})=0$, $i=1,\ldots,m$, where $K[X_m,Y_m]$ denotes the commutative algebra generated by the set $X_m\cup Y_m$.

This is what we shall call \textit{Nowicki's conjecture} until the end of the paper. 
 
Let us introduce now the noncommutative setting in which we shall generalize Nowicki's conjecture. Let $X=\{x_1,x_2,\ldots\}$ be an infinite set of variables and let $\mathcal{C}$ denote a special class of algebras (for instance, associative algebras, Lie algebras, commutative algebras, etc.). We can consider the free $\mathcal{C}$-algebra $\mathcal{C}\{X\}$ generated by $X$ over $K$ and we shall call its elements \textit{$\mathcal{C}$-polynomials} or simply \textit{polynomials}. A polynomial $f=f(x_1,\ldots,x_n)$ is said to be a \textit{polynomial identity} for the algebra $A\in\mathcal{C}$ if $f(a_1,\ldots,a_n)=0$ for any $a_i\in A$. 

The set of all polynomial identities of a given algebra $A$ is denoted by $T_{\mathcal{C}}(A)$ (or $Id_{\mathcal{C}}(A)$) and is invariant under all $\mathcal{C}$-endomorphisms of $\mathcal{C}\{X\}$, i.e, it is a $T_{\mathcal{C}}$-ideal called the \textit{$T_{\mathcal{C}}$-ideal} of $A$. The algebra $\mathcal{C}\{X\}/Id_{\mathcal{C}}(A)$ is called \textit{relatively free $\mathcal{C}$-algebra} of $A$.


We can generalize Nowicki's conjecture to relatively free algebras in the following way:  let us consider the free algebra $\mathcal{C}\{X_m,Y_m\}$ endowed with the derivation $\delta$ such that $\delta(x_i)=0$ and $\delta(y_i)=x_i$ for any $i$.  Chosen $A\in\mathcal{C}$, $Id(A)$ is, of course, $\delta$-invariant, then $\delta$ induces a derivation $\delta$ on the relatively free algebra $\mathcal{C}_n(A):=\mathcal{C}\{X_m,Y_m\}/Id(A)$. 
\[\textit{Is the algebra of constants of $\mathcal{C}_n(A)$ finitely generated?}\]\[\textit{ Are the generators uniformly looking?}\]

    Let $A\in\mathcal{C}$ and consider the class \[\mathfrak{V}(A)=\{B\in \mathcal{C}|Id(A)\subseteq Id(B)\}\] that is called \textit{variety of $\mathcal{C}$-algebras generated by $A$}.
    Here is the answer to the first question: 
\begin{theorem}[Drensky \cite{D1}, Drensky and Gupta \cite{DG}]
Let $\mathfrak{V}:=\mathfrak{V}(A)$ be a variety of associative or Lie algebras, where $A$ is endowed with the derivation $\delta$. Then the algebra of constants of $A$ is not finitely generated if and only if $\mathfrak{V}$ contains $UT_2(K)$, the algebra of upper triangular $2\times 2$ matrices with entries from $K$ . 
\end{theorem}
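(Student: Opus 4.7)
The plan is to prove the theorem by splitting into two directions, following essentially the arguments of \cite{D1} and \cite{DG}.

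For the direction ``$UT_2(K) \in \mathfrak{V}$ implies $F_m^\delta(\mathfrak{V})$ is not finitely generated'', I would first reduce to the case $\mathfrak{V}=\mathfrak{V}(UT_2(K))$. The natural surjection $F_m(\mathfrak{V}) \twoheadrightarrow F_m(\mathfrak{V}(UT_2(K)))$ is $\delta$-equivariant, and since $\delta$ integrates to a locally finite rational $SL_2(K)$-action (and $SL_2(K)$ is reductive in characteristic zero), the functor of taking $\delta$-constants is exact, so the surjection descends to $F_m^\delta(\mathfrak{V}) \twoheadrightarrow F_m^\delta(\mathfrak{V}(UT_2(K)))$. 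Since quotient algebras inherit finite generation, it suffices to establish non-finite-generation for $\mathfrak{V}=\mathfrak{V}(UT_2(K))$ itself. In the associative case the defining identity is $[x_1,x_2][x_3,x_4] \equiv 0$, which makes the commutator ideal $F_m'(\mathfrak{V})$ a bimodule over the commutative quotient $K[X_m,Y_m]$; the Lie case is analogous via the metabelian identity. Within this bimodule I would construct an explicit infinite family of $\delta$-constants of strictly increasing multidegree, patterned on symmetrized expressions such as $y_1^k[x_i,y_j]-\cdots$, and then argue by a multidegree filtration that infinitely many of them are algebra-indecomposable.

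For the converse ``$UT_2(K) \notin \mathfrak{V}$ implies $F_m^\delta(\mathfrak{V})$ is finitely generated'', the strategy is classical invariant-theoretic. The derivation $\delta$ fits into an $\mathfrak{sl}_2(K)$-triple acting on $KX_m \oplus KY_m$, each pair $(x_i,y_i)$ spanning a copy of the standard two-dimensional module, which induces a rational $SL_2(K)$-action on $F_m(\mathfrak{V})$; then $F_m^\delta(\mathfrak{V})$ is identified with the algebra of $U$-invariants for $U$ the unipotent radical of a Borel in $SL_2(K)$. The hypothesis $UT_2(K)\notin \mathfrak{V}$ imposes a strong PI-structure: in the associative case one obtains $[z_1,w_1]\cdots[z_n,w_n]\equiv 0$ for some fixed $n$, so $F_m(\mathfrak{V})$ becomes a finitely generated module over its commutative quotient; in the Lie case, an Engel identity of fixed class holds. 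Combining Weitzenb\"ock's theorem applied to the commutative core with the reductivity of $SL_2(K)$ (which ensures that $U$-invariants of a finitely generated module over a finitely generated invariant algebra remain finitely generated), one concludes that $F_m^\delta(\mathfrak{V})$ is finitely generated.

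The hardest step is the ``not finitely generated'' direction. Exhibiting an infinite family of $\delta$-constants is fairly straightforward once the bimodule structure is in place, but proving that infinitely many of them are \emph{algebra-indecomposable}, i.e.\ that no finite subset of them together with the obvious constants $x_i$ and $u_{ij}=x_iy_j-x_jy_i$ coming from the commutative part generates the entire $F_m^\delta(\mathfrak{V}(UT_2(K)))$, requires careful multidegree bookkeeping and a precise analysis of how $\delta$ acts on commutators modulo $[F_m,F_m]^2$; this is where the proof is most delicate.
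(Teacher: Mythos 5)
The paper itself contains no proof of this theorem: it is quoted from Drensky \cite{D1} and Drensky--Gupta \cite{DG} as a known result, so there is no internal argument to measure yours against. Judged on its own, your outline points in the right general direction --- it is essentially the strategy of the cited papers (reduce to $\mathfrak{V}(UT_2(K))$ and exhibit infinitely many indecomposable constants in the commutator ideal for one implication; exploit a structural consequence of $UT_2(K)\notin\mathfrak{V}$ for the other), and your reduction step is legitimate: in characteristic zero the homogeneous components of $F_m(\mathfrak{V})$ are finite-dimensional $GL_m$-modules, hence completely reducible over the embedded $SL_2(K)$, so taking $\delta$-constants is indeed exact and passes to quotient varieties. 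But as a proof the proposal has two genuine gaps.

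First, the entire mathematical content of the ``not finitely generated'' direction is the indecomposability argument, which you describe but do not carry out: you neither write down the infinite family of constants nor give the filtration argument showing that no finite set of them, together with the lifts of $x_i$ and $u_{ij}$, generates everything. Flagging this step as ``delicate'' does not discharge it. Second, in the converse direction your structural claim is false as stated: $UT_2(K)\notin\mathfrak{V}$ does \emph{not} force an identity $[z_1,w_1]\cdots[z_n,w_n]\equiv 0$ in $\mathfrak{V}$. The variety generated by the infinite-dimensional Grassmann algebra $G$ (treated in Section 6 of this paper) is a counterexample: $UT_2(K)\notin\mathrm{var}(G)$ since $G$ satisfies $[z_1,z_2,z_3]\equiv0$ while $UT_2(K)$ does not, yet $[e_1,e_2]\cdots[e_{2n-1},e_{2n}]=2^n e_1\cdots e_{2n}\neq0$ in $G$ for every $n$. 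What is actually available, and what \cite{D1} uses, is a weaker noetherianity/finiteness property of the \emph{finite-rank} relatively free algebra $F_m(\mathfrak{V})$ (for instance, in $F_m(\mathrm{var}(G))$ sufficiently long products of commutators do vanish because of the alternation identity (\ref{proof Grass})); moreover, in the Lie case your $SL_2$/commutative-core mechanism does not apply directly, and one instead invokes Zelmanov's theorem that Engel Lie algebras in characteristic zero are nilpotent, so that $F_m(\mathfrak{V})$ is finite-dimensional and the conclusion is immediate. Without these corrected inputs the second half of your argument does not close.
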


\section{Drensky and Makar-Limanov's approach}
Herein we shall depict sketches of the proof of Nowicki's conjecture as showed by Drensky and Makar-Limanov in \cite{DML}.

Let $X_m=\{x_1,\ldots,x_m\}$, $Y_m=\{y_1,\ldots,y_m\}$ be two disjoint sets of commutative variables.
    Let $X'=\{x_1,\ldots,x_{m-1}\}$, $Y'=\{y_1,\ldots,y_{m-1}\}$, $U'=\{u_{i,j}|1\leq i,j\leq m-1\}$   (where $u_{i,j}=x_iy_j-x_jy_i$), $X=X_m$, $Y=Y_m$ and consider a constant $f=f(X',Y',x_m,y_m)$. Of course $f$ can be taken to be homogeneous in $(X,Y)$ and we carry on by induction on $m$ and on the total degree in $x_m,y_m$.
    
    \begin{remark}
    For a monomial $v\in K[X,Y]$ of degree $(d_1,d_2)$ with respect to $(X,Y)$, $\delta(v)$ is of degree $(d_1+1,d_2-1)$ with respect to $(X,Y)$.
    \end{remark}
    
    If $\underline{n=1}$, then by the previous remark, we get $K[x_1,y_1]^\delta=K[x_1]$ and we are done because in this case $u_{1,1}=0$.

We consider \underline{$m>1$} and we may suppose $\deg_{x_m,y_m}f>0$. We write \[f=(a_py_m^p+a_{p-1}x_my_m^{p-1}+\cdots+a_1x_m^{p-1}y_m+a_0x_m^{p})x_m^q,\] where $a_j\in K[X',Y']$, $a_p\neq0$.

\begin{remark}
Since both $f$ and $x_m^q$ are constants, the same holds for $f/x_m^q$. Hence we may assume $q=0$.
\end{remark}

\begin{remark}
Because \[0=\delta(f)\]\[=\delta(a_p)y_m^p+(pa_p+\delta(a_{p-1}))x_my_m^{p-1}+\cdots+(a_1+\delta(a_0))x_m^p\] we have $a_p$ is a constant and, by induction, has the form \[a_p=\sum x_{s_1}\cdots x_{s_k}b_s(U').\]
\end{remark}

It is easy to note that $\deg_X(f)\geq p+\deg_{Y'}(a_p)$, then, by the last remark, we may write $f$ as \[f=\sum x_{s_1}\cdots x_{s_p}c_s(X',U')y_m^p+\sum_{i=0}^{p-1}a_i(X',Y')x_m^{p-i}y_m^i.\]

\begin{remark}
This is very important: \[x_{s_j}y_m=(x_{s_j}y_m-x_my_{s_j})+x_my_{s_j}=u_{s_j,m}+x_my_{s_j}.\]
\end{remark}
    
    Then we get \[f=\sum c_s(X',U')\prod_{j=1}^pu_{s_j,m}+x_mf_1(X',Y',x_m,y_m)\] and $\deg_{x_m,y_m}(f_1)<\deg_{x_m,y_m}(f)$ but $f$, $x_m$, $c_s(X',U')\prod_{j=1}^pu_{s_j,m}$ are all constants, then $f_1$ is a constant and we are done by induction.

\section{Generalized Nowicki's conjecture}

    Let us take an integral domain $D$ over $K$, a set of variables $Y_m:=\{y_1,\ldots,y_m\}$ and the algebra $A:=D[Y_m]$. A derivation $\Delta$ on $A$ is said \textit{elementary} if $\Delta(D)=0$ and $\Delta(y_i)\in D$ for any $i$. \ We consider also the following objects  \[u_{i,j}:=\left|\begin{array}{cc}
       \Delta(y_i)  & y_j \\
       \Delta(y_j)  & y_i
    \end{array}\right|.\]
They are called \textit{determinants} and they belong to $D[y_m]^\Delta$. 

\begin{question}[Generalized Nowicki's conjecture]
Is the $D$-algebra $D[Y_m]^\Delta$ generated by the determinants?
\end{question}

    Of course the previous question is precisely the Nowicki's conjecture when $\Delta$ is such that $\Delta(D)=0$ and $\Delta(y_i)=x_i$ for any $i$.
    \ In the first proof of Nowicki's conjecture by Khoury, the author considered an elementary derivation $\Delta$ sending $\Delta(y_i)=x_i^{m_i}$. Then the algebra of constants $K[X_m,Y_m]^\Delta$ is generated by $X_m$ and the $u_{i,j}$'s.
    The more general result in this direction is due to Kuroda:
    \begin{theorem}
    Let $\Delta$ be an elementary derivation of $D[Y_m]$ such that $\Delta(y_i)=f_i$, for any $i$. Suppose further the $f_i$'s are algebraically independent over $K$. Then $D[Y_m]^\Delta$ is generated by the determinants if $D$ is \textbf{flat} over $K[f_1,\ldots,f_m]$.
    \end{theorem}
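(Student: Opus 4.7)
The plan is to reduce the problem to the classical Nowicki conjecture via a flat base-change argument. Set $B=K[f_1,\dots,f_m]$; because $f_1,\dots,f_m$ are algebraically independent over $K$, the ring $B$ is a genuine polynomial ring in $m$ variables, and sits naturally as a subring of $D$. Consider the intermediate algebra $B[Y_m]$ equipped with the derivation $\delta$ defined by $\delta|_B=0$ and $\delta(y_i)=f_i$. Via the $K$-algebra isomorphism $B[Y_m]\cong K[X_m,Y_m]$ sending $f_i\mapsto x_i$, this $\delta$ becomes exactly the classical Weitzenb\"ock derivation from Nowicki's conjecture. Hence $B[Y_m]^{\delta}$ is the $B$-subalgebra generated by the determinants $u_{i,j}=f_iy_j-f_jy_i$.

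I would then identify $D[Y_m]=D\otimes_B B[Y_m]$ (extension of scalars along $B\hookrightarrow D$), and match the derivations: because $\Delta(D)=0$, the derivation $\Delta$ is $D$-linear, so in particular $\delta$ is $B$-linear and $\Delta=1_D\otimes\delta$. The map $\delta\colon B[Y_m]\to B[Y_m]$ fits in the exact sequence of $B$-modules
\[
0\longrightarrow B[Y_m]^{\delta}\longrightarrow B[Y_m]\stackrel{\delta}{\longrightarrow} B[Y_m].
\]
Tensoring by $D$ preserves exactness thanks to the flatness hypothesis, and identifies the new kernel with $D\otimes_B B[Y_m]^{\delta}$; hence
\[
D[Y_m]^{\Delta}\;=\;D\otimes_B B[Y_m]^{\delta}.
\]
Combined with the first paragraph, this realises $D[Y_m]^{\Delta}$ as the $D$-subalgebra of $D[Y_m]$ generated by the $u_{i,j}$, which is the desired statement.

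The main obstacle is the flatness step: one must verify that for a flat $B$-algebra $D$ and a $B$-linear map $\varphi\colon M\to N$ one has $\ker(1_D\otimes\varphi)=D\otimes_B\ker(\varphi)$. This is standard and follows from applying the exact functor $D\otimes_B-$ to the short exact sequence $0\to\ker\varphi\to M\to\operatorname{im}\varphi\to 0$. The algebraic independence assumption on $f_1,\dots,f_m$ is used in exactly one place, to identify $B$ with a polynomial ring so that classical Nowicki applies; without it the isomorphism $B[Y_m]\cong K[X_m,Y_m]$ would fail and the reduction could not be started. Beyond these two points no further combinatorics is required, as the bulk of the work is discharged once and for all by the classical Nowicki conjecture.
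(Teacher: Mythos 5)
Your argument is correct. The paper states this result only as a citation of Kuroda and gives no proof, so there is nothing internal to compare against; but your reduction is exactly the standard one: algebraic independence makes $B=K[f_1,\ldots,f_m]$ a polynomial subring of $D$, the classical Nowicki conjecture (proved in Section 3) identifies $B[Y_m]^{\delta}$ as the $B$-algebra generated by the determinants, and flatness of $D$ over $B$ gives $\ker(1_D\otimes\delta)=D\otimes_B\ker\delta$, hence $D[Y_m]^{\Delta}=D\otimes_B B[Y_m]^{\delta}$. One small remark: the determinant displayed in the paper, $\Delta(y_i)y_i-y_j\Delta(y_j)$, is a typo (it is not even a constant); the intended element is $u_{i,j}=f_iy_j-f_jy_i$, which is what you use.
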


  Notice that this result is deeply different from the original version of Nowicki's conjecture.\ \textit{Nowicki's conjecture can be restated in the language of \textbf{classical invariant theory}} but Kuroda's result in case the $f_i$'s are not linear! 
  
  Anyway, using Kuroda's result, Drensky \cite{D2000} proved the next:
  
  \begin{theorem}
  Let $\Delta$ be an elementary derivation of $D[Y_m]$ such that $\Delta(y_i)=f_i=f_i(x_i)$, for any $i$, where the $f_i$'s are non-constant polynomials. Then $D[Y_m]^\Delta$ is generated by $X_m$ and the determinants.
  \end{theorem}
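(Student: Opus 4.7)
The plan is to apply Kuroda's theorem (stated just above) with $D = K[X_m]$. Two hypotheses must be verified: (i) the polynomials $f_1(x_1), \ldots, f_m(x_m)$ are algebraically independent over $K$, and (ii) $K[X_m]$ is flat as a module over the subalgebra $R := K[f_1, \ldots, f_m]$. Granted these, Kuroda's theorem yields that $K[X_m, Y_m]^\Delta$ is generated as a $K[X_m]$-algebra by the determinants $u_{i,j}$, which is equivalent to saying it is generated as a $K$-algebra by $X_m$ together with the $u_{i,j}$'s; that is exactly the desired conclusion.

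For (i), suppose $P(f_1, \ldots, f_m) = 0$ for some $P = \sum_a c_a t^a \in K[t_1, \ldots, t_m]$. For each multi-index $a = (a_1, \ldots, a_m)$, the monomial $c_a f_1(x_1)^{a_1}\cdots f_m(x_m)^{a_m}$ contributes a top-degree term in $(x_1, \ldots, x_m)$ of multi-degree $(a_1 \deg f_1, \ldots, a_m \deg f_m)$ with nonzero coefficient whenever $c_a \neq 0$. Since the $\deg f_i$ are positive, distinct multi-indices $a$ yield distinct multi-degrees, so no cancellation is possible and $P = 0$.

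For (ii), since each $f_i(x_i)$ is non-constant, the inclusion $K[f_i(x_i)] \hookrightarrow K[x_i]$ realizes $K[x_i]$ as a free $K[f_i(x_i)]$-module of rank $\deg(f_i)$, with basis $\{1, x_i, \ldots, x_i^{\deg(f_i)-1}\}$. Tensoring over $K$ and using that $f_i$ involves only $x_i$, one has
\[
K[X_m] \;\cong\; \bigotimes_{i=1}^{m} K[x_i], \qquad R \;\cong\; \bigotimes_{i=1}^{m} K[f_i(x_i)],
\]
and a tensor product over $K$ of free modules is free; hence $K[X_m]$ is a free (therefore flat) $R$-module, with the explicit basis $\{x_1^{a_1}\cdots x_m^{a_m} : 0 \le a_i < \deg(f_i)\}$.

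No serious obstacle arises: the whole statement reduces, via Kuroda's theorem, to the elementary flatness check above, which in turn relies only on the fact that each $f_i$ is a polynomial in the single variable $x_i$. This single-variable hypothesis is essential, as the analogous flatness claim fails for arbitrary algebraically independent tuples in $K[X_m]$, which is precisely why Kuroda's theorem does not by itself yield such an explicit generating set in the broader setting.
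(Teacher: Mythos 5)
Your proposal is correct and follows exactly the route the paper indicates for this theorem: it is deduced from Kuroda's flatness criterion, and the real content is the verification that $K[X_m]$ is free, hence flat, over $K[f_1(x_1),\ldots,f_m(x_m)]$, which you carry out correctly via the rank-$\deg f_i$ freeness of $K[x_i]$ over $K[f_i(x_i)]$. (One small polish in the algebraic-independence check: to rule out cancellation you should pick a multi-index $a$ maximal for the componentwise order among those with $c_a\neq 0$, since only $b$ with $b_i\ge a_i$ for all $i$ can contribute to the monomial $x_1^{a_1\deg f_1}\cdots x_m^{a_m\deg f_m}$; with that choice the coefficient is $c_a\prod_i \mathrm{lc}(f_i)^{a_i}\neq 0$.)
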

  
  Notice also that in all of the result above, the determinants generating the algebra of constants are not free generators. A prominent part of those works deals with finding a free set of generators.

\section{The free metabelian associative algebras}

We will focus on another generalization of Nowicki's conjecture. The environment now is that of relatively free algebras. The complete proofs of the results presented in the next two section can be found in the paper \cite{CF}.

We start off our investigation with the relatively free algebra of the associative metabelian algebra.
Let $K$ be a field of characteristic zero, $P_{2d}$ be the free unitary associative algebra of rank ${2d}$
over $K$, $P_{2d}'=P_{2d}[P_{2d},P_{2d}]P_{2d}$ be its commutator ideal generated by all elements of the form
\[
[x,y]=xy-yx, \quad x,y\in P_{2d}.
\]
Let us consider the quotient
algebra $F_{2d}=P_{2d}/(P_{2d}')^2$. The algebra $F_{2d}$ is the free algebra of rank $2d$ in the
variety of all associative algebras satisfying the polynomial identity $[x,y][z,t]\equiv0$.
Let the free associative metabelian algebra $F_{2d}$ be generated by $X_{2d}=\{x_1,\ldots,x_{2d}\}$.
We assume that all Lie commutators are left normed; i.e., 
\[
[x,y,z]=[[x,y],z], \quad x,y,z\in F_{2d}.
\]
The commutator ideal $F_{2d}'$ of $F_{2d}$ 
has the following basis (see e.g. \cite{B, D})
\[
x_1^{\xi_1}\cdots x_{2d}^{\xi_{2d}}[x_{i},x_{j},x_{j_1},\ldots,x_{j_m}],\quad \xi_i\geq 0, \quad i>j\leq j_1\leq\cdots\leq j_m\leq 2d.
\]
As a consequence of the metabelian identity in $F_{2d}'$ we have the following identity:
\[
x_{i_{\pi(1)}}\cdots x_{i_{\pi(m)}}[x_i,x_j,x_{j_{\sigma(1)}},\ldots,x_{j_{\sigma(n)}}]
\equiv x_{i_1}\cdots x_{i_m}[x_{i},x_{j},x_{j_1},\ldots,x_{j_n}],
\]
for any permutation $\pi\in S_m$ and $\sigma\in S_n$. Thus the commutator ideal 
$F_{2d}'$ can be ``seen'' as a module of polynomial algebra from both sides via the associative (left side) and Lie (right side) multiplication.

We recall now some of the results and constructions given in \cite{DDF1}. Let
$U_{2d}=\{u_1,\ldots,u_{2d}\}$ and $V_{2d}=\{v_1,\ldots,v_{2d}\}$ be two sets of commuting variables
and let $K[U_{2d},V_{2d}]$ be the polynomial algebra acting on $F_{2d}'$ as follows.
If $f\in F_{2d}'$, then
\[
fu_i=x_if, \quad fv_i=[f,x_i], \quad i=1,\ldots,2d.
\]
This action defines a $K[U_d,V_d]$-module structure on the vector space $F_{2d}'$.

We are going to construct a wreath product which is the same as the one used in \cite{DDF1}. It is a particular case
of the construction of Lewin \cite{L} given in \cite{DG} and is similar to the construction of
Shmel'kin \cite{Sh} in the case of free metabelian Lie algebras as appeared in \cite{DDF}.

Let $Y_{2d} = \{y_1, \ldots, y_{2d}\}$ and $V_{2d}'=\{v_1',\ldots,v_{2d}'\}$ be sets of commuting variables and let
$A_{2d}=\{a_1,\ldots,a_{2d}\}$ in $2d$ variables.
Now let $M_{2d}$ be the free $K[U_{2d},V_{2d}']$-module generated by $A_{2d}$
equipped with with trivial multiplication
$M_{2d}\cdot M_{2d}=0$. We endow $M_{2d}$ with a structure of a free $K[Y_{2d}]$-bimodule
structure via the action
\[
y_ja_i = a_iu_j, \quad a_iy_j = a_iv_j', \quad i,j = 1, \ldots, 2d.
\]
The wreath product $W_{2d}=K[Y_{2d}]\rightthreetimes M_{2d}$ 
is an algebra satisfying the metabelian identity. As well as in \cite{L} $F_{2d}$ can be embedded into $W_{2d}$. In fact we have the following result.

\begin{proposition}\label{embedding}
The mapping $\varepsilon :x_j\to y_j+a_j$, $j=1,\ldots,2d$, extends to an embedding $\varepsilon$ of $F_{2d}$ into $W_{2d}$.
\end{proposition}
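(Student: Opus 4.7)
The plan has two parts: first, extend $\varepsilon$ to a homomorphism; second, verify it is injective.

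For the first part, observe that $W_{2d}$ satisfies the metabelian identity $[w_1,w_2][w_3,w_4]\equiv 0$. Indeed, the quotient $W_{2d}/M_{2d}\cong K[Y_{2d}]$ is commutative, so every commutator $[w_1,w_2]$ in $W_{2d}$ lies in $M_{2d}$, and $M_{2d}\cdot M_{2d}=0$ by construction forces the product of two commutators to vanish. By the universal property of the free associative algebra, $x_j\mapsto y_j+a_j$ extends to a homomorphism $P_{2d}\to W_{2d}$, which then factors through $F_{2d}=P_{2d}/(P_{2d}')^2$.

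For injectivity, compare the short exact sequences
\[
0\to F_{2d}'\to F_{2d}\to K[X_{2d}]\to 0\quad\text{and}\quad 0\to M_{2d}\to W_{2d}\to K[Y_{2d}]\to 0.
\]
The map $\varepsilon$ is compatible with both, and the induced map on quotients $x_j\mapsto y_j$ is an isomorphism, so it suffices to prove that $\varepsilon|_{F_{2d}'}\colon F_{2d}'\to M_{2d}$ is injective. A direct calculation using $M_{2d}\cdot M_{2d}=0$ yields
\[
\varepsilon([x_i,x_j])=a_i(v_j'-u_j)-a_j(v_i'-u_i).
\]
Setting $w_k:=v_k'-u_k$, the bimodule actions combined with $M_{2d}^2=0$ give $[m,y_k+a_k]=mv_k'-u_km=w_km$ for any $m\in M_{2d}$; coupling this with $\varepsilon(x_l)\cdot m=u_lm$ and inducting on the commutator length leads to the closed form
\[
\varepsilon\bigl(x_1^{\xi_1}\cdots x_{2d}^{\xi_{2d}}[x_i,x_j,x_{j_1},\ldots,x_{j_m}]\bigr)=u_1^{\xi_1}\cdots u_{2d}^{\xi_{2d}}\,(a_iw_j-a_jw_i)\,w_{j_1}\cdots w_{j_m}.
\]

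The final and central step, where the main difficulty lies, is showing that these images form a $K$-linearly independent family in $M_{2d}$. Since $M_{2d}$ is a free $K[U_{2d},V_{2d}']=K[U_{2d},w_1,\ldots,w_{2d}]$-module on $A_{2d}$, one can inspect the coefficient of each $a_l$ independently. Contributions to the $a_l$-coefficient split into two kinds: from basis elements with $i=l$, giving monomials $u^\xi w_jw_{j_1}\cdots w_{j_m}$ whose smallest $w$-index is $j<l$, and from basis elements with $j=l$, giving monomials whose $w$-indices are all $\geq l$. These two supports are disjoint, so the two parts must vanish individually. Within the first family the ordering condition $j\leq j_1\leq\cdots\leq j_m$ lets one recover $j$ as the smallest $w$-index and hence $(j,J)$, while the $u$-part recovers $\xi$, so the coefficients indexed by $i=l$ vanish. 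Sweeping $l$ through $\{1,\ldots,2d\}$ eliminates every basis element, completing the proof.
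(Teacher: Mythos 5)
Your proposal is correct and follows the standard Lewin/Shmel'kin-type argument that the paper itself only sketches (the proposition is recalled from \cite{DDF1} without proof, but the surrounding text records exactly your closed form as equation (\ref{commutator element}) and the freeness of $M_{2d}$ over $K[U_{2d},V_{2d}]$). Your reduction to the commutator ideal via the two exact sequences, the computation of $\varepsilon$ on the basis of $F_{2d}'$, and the linear-independence argument reading off the $a_l$-coefficients and recovering $j$ as the minimal $v$-index are all sound and consistent with the paper's framework.
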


We identify $v_i=v_i'-u_i$, $i=1,\ldots,2d$, and get
\begin{equation}\label{commutator element}
\varepsilon(x_{i_1}\cdots x_{i_m}[x_{i},x_{j},x_{j_1},\ldots,x_{j_n}])
=(a_iv_j-a_jv_i)v_{j_1}\cdots v_{j_n}u_{i_1}\cdots u_{i_m}
\end{equation}
Thus we may assume that $M_{2d}$ is a free $K[U_{2d},V_{2d}]$-module. Clearly the commutator ideal
$F_{2d}'$ is embedded into $M_{2d}$, too.
An element $\sum a_if(U_{2d},V_{2d})\in M_{2d}$
is an image of some element from $F_{2d}'$ if and only if $\sum v_if(U_{2d},V_{2d})=0$,
as a consequence of (\ref{commutator element}).

Let $\delta$ be the Weitzenb\"ock derivation of $F_{2d}$ acting on the
variables $U_{2d},V_{2d}$, as well as explained in the Introduction on $X_{2d}$. By \cite{DDF1} we know that
the vector space $M_{2d}^{\delta}$ of the constants of $\delta$ in the $K[U_{2d},V_{2d}]$-module $M_{2d}$
is a $K[U_{2d},V_{2d}]^{\delta}$-module.
The following results are particular cases of \cite{D1} (Proposition 3) and \cite{DDF1}.

\begin{theorem}\label{theorem for finite generation}
The vector spaces $(F_{2d}')^{\delta}$ and $M_{2d}^{\delta}$ are finitely generated $K[U_{2d},V_{2d}]^{\delta}$-modules.
\end{theorem}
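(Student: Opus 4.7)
My plan is to reduce both statements to the classical Weitzenböck theorem, first via a square-zero algebraization trick and then via a Noetherian submodule argument. Set $P = K[U_{2d},V_{2d}]$ and $R = P^{\delta}$; by Weitzenböck's theorem $R$ is a finitely generated commutative $K$-algebra, hence Noetherian by Hilbert's basis theorem. This ring will serve as the ground ring for both finite-generation assertions.

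To handle $M_{2d}^{\delta}$, I would realize $M_{2d}$ as a square-zero ideal inside a larger algebra on which $\delta$ is still Weitzenböck. Concretely, form
\begin{equation*}
B = P \oplus M_{2d}, \qquad (p,m)(p',m') = (pp',\, pm' + p'm),
\end{equation*}
and extend $\delta$ componentwise. Since $M_{2d}$ is free of finite rank over $P$ and $\delta$ is linear and locally nilpotent on the finite-dimensional $K$-span of $U_{2d}\cup V_{2d}\cup A_{2d}$ (which generates $B$ as an algebra), $\delta$ is a Weitzenböck derivation of $B$. Weitzenböck's theorem applied to $B$ then yields that
\begin{equation*}
B^{\delta} = R \oplus M_{2d}^{\delta}
\end{equation*}
is a finitely generated $K$-algebra, say by $f_1,\ldots,f_p \in R$ and $g_1,\ldots,g_q \in M_{2d}^{\delta}$ (the components separate because the $M$-degree gives a grading on $B^{\delta}$). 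Because $M_{2d}\cdot M_{2d} = 0$ in $B$, every element of $M_{2d}^{\delta}$ must be linear in the $g_j$'s when expressed as a polynomial in these generators, giving $M_{2d}^{\delta} = R g_1 + \cdots + R g_q$. Hence $M_{2d}^{\delta}$ is a finitely generated $R$-module.

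For $(F_{2d}')^{\delta}$ I would invoke the embedding of Proposition~\ref{embedding}: under the identification $v_i = v_i' - u_i$, the commutator ideal $F_{2d}'$ sits inside $M_{2d}$ as the $K[U_{2d},V_{2d}]$-submodule consisting of those $\sum a_i f_i$ satisfying the $K[U_{2d},V_{2d}]$-linear constraint $\sum v_i f_i = 0$. Taking $\delta$-constants, $(F_{2d}')^{\delta} = F_{2d}' \cap M_{2d}^{\delta}$ is an $R$-submodule of $M_{2d}^{\delta}$, and since $M_{2d}^{\delta}$ is a finitely generated module over the Noetherian ring $R$, every $R$-submodule is itself finitely generated; in particular so is $(F_{2d}')^{\delta}$.

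The main obstacle is step two: converting a finite-generation-of-modules question into a finite-generation-of-algebras assertion so that the classical Weitzenböck theorem becomes applicable. The square-zero extension is the clean algebraic device that accomplishes this conversion; once it is in place, the remaining step is a purely formal Noetherian closure argument, and the setup of the ground ring is quoted directly from the classical literature.
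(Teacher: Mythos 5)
The paper itself does not prove this theorem: it quotes it as a special case of \cite{D1} (Proposition 3) and of \cite{DDF1}. Your argument is therefore a self-contained alternative, and its overall architecture --- reduce everything to Weitzenb\"ock's theorem plus Noetherianity of $R=K[U_{2d},V_{2d}]^{\delta}$, handle $M_{2d}^{\delta}$ by an algebraization, and obtain $(F_{2d}')^{\delta}$ as a submodule via the embedding of Proposition \ref{embedding} --- is sound and close in spirit to how those references proceed. The Noetherian closure step and the identification of $(F_{2d}')^{\delta}$ with an $R$-submodule of $M_{2d}^{\delta}$ (the embedding $\varepsilon$ is $\delta$-equivariant and $K[U_{2d},V_{2d}]$-linear, so constants go to constants) are correct as written.

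The one step that needs repair is the phrase ``Weitzenb\"ock's theorem applied to $B$.'' Your $B=P\oplus M_{2d}$ is not a polynomial algebra: it is $K[U_{2d},V_{2d},A_{2d}]/(a_ia_j)$, and the classical theorem is stated only for polynomial algebras with a linear nilpotent derivation; for a general finitely generated commutative algebra with a locally nilpotent derivation the kernel can fail to be finitely generated, so the extension is not free. Two repairs are available. (i) Invoke the Hadziev--Grosshans theorem ($A^{U}$ is finitely generated whenever $A$ is a finitely generated algebra with a rational $SL_2$-action and $U$ is a maximal unipotent subgroup), checking that your $\delta$ integrates to an $SL_2$-action on $B$ because it is linear on the generating space and the relation ideal $(a_ia_j)$ is $SL_2$-stable. (ii) Simpler, and entirely within your own bookkeeping: drop the square-zero quotient and apply the classical Weitzenb\"ock theorem directly to the genuine polynomial algebra $K[U_{2d},V_{2d},A_{2d}]=P\otimes S(KA_{2d})$. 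Since $\delta$ preserves the $A$-degree, its algebra of constants is graded and admits homogeneous generators; the degree-zero component is $R$, the degree-one component is exactly $M_{2d}^{\delta}$, and the same ``each element is linear in the degree-one generators'' argument you already use yields $M_{2d}^{\delta}=Rg_1+\cdots+Rg_q$. With either repair the proof goes through.
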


In the sequel we shall write down an explicit set of generators for the algebra of constants $K[U_{2d},V_{2d}]^{\delta}$ as a consequence of
the Nowicki conjecture \cite{N} (proved in \cite{K1, K2, DML, Bed, Kuroda}), and one of the results by Drensky and Makar-Limanov \cite{DML}. 
Let the Weitzenb\"ock derivation $\delta$ act on $X_{2d},A_{2d},U_{2d},V_{2d}$
by the rule
\[
\delta(x_{2i})=x_{2i-1}, \delta(x_{2i-1})=0,\quad \delta(a_{2i})=a_{2i-1}, \delta(a_{2i-1})=0,
\]
\[
\delta(u_{2i})=u_{2i-1}, \delta(u_{2i-1})=0,\quad\delta(v_{2i})=v_{2i-1}, \delta(v_{2i-1})=0,
\]
for $i=1,\ldots,d$. Then the algebra of constants $K[U_{2d},V_{2d}]^{\delta}$ is generated by 
\[
u_1,u_3,\ldots,u_{2d-1},v_1,v_3,\ldots,v_{2d-1}
\]
and the determinants
\[
\alpha_{pq}=u_{2p-1}u_{2q}-u_{2p}u_{2q-1}=\begin{vmatrix}
u_{2p-1} & u_{2p} \\
u_{2q-1} &u_{2q}
\end{vmatrix},\quad 1\leq p<q\leq d,
\]
\[
\beta_{pq}=v_{2p-1}v_{2q}-v_{2p}v_{2q-1}=\begin{vmatrix}
v_{2p-1} & v_{2p} \\
v_{2q-1} &v_{2q}
\end{vmatrix},\quad1\leq p<q\leq d,
\]
\[
\gamma_{pq}=u_{2p-1}v_{2q}-u_{2p}v_{2q-1}=\begin{vmatrix}
u_{2p-1} & u_{2p} \\
v_{2q-1} &v_{2q}
\end{vmatrix}, \quad p,q=1,\ldots,d,
\]
with the following defining relations
\begin{equation} \label{S1}
u_{2i-1}\alpha_{jk}-u_{2j-1}\alpha_{ik}+u_{2k-1}\alpha_{ij}=0,\quad 1\leq i<j<k\leq d,
\end{equation}
\begin{equation} \label{S2}
u_{2i-1}\gamma_{jk}-u_{2j-1}\gamma_{ik}+v_{2k-1}\alpha_{ij}=0,\, 1\leq i<j\leq d,\, 1\leq k\leq d,
\end{equation}
\begin{equation} \label{S3}
u_{2i-1}\beta_{jk}-v_{2j-1}\gamma_{ik}+v_{2k-1}\gamma_{ij}=0,\, 1\leq i\leq d,\, 1\leq j<k\leq d,
\end{equation}
\begin{equation} \label{S4}
v_{2i-1}\beta_{jk}-v_{2j-1}\beta_{ik}+v_{2k-1}\beta_{ij}=0,\quad 1\leq i<j<k\leq d,
\end{equation}
\begin{equation}  \label{R1}
\alpha_{ij}\alpha_{kl}-\alpha_{ik}\alpha_{jl}+\alpha_{il}\alpha_{jk}=0,\, 1\leq i<j<k<l\leq d,
\end{equation}
\begin{equation}  \label{R2}
\alpha_{ij}\gamma_{kl}-\alpha_{ik}\gamma_{jl}+\gamma_{il}\alpha_{jk}=0,\, 1\leq i<j<k\leq d,\, 1\leq l\leq d,
\end{equation}
\begin{equation}  \label{R3}
\alpha_{ij}\beta_{kl}-\gamma_{ik}\gamma_{jl}+\gamma_{il}\gamma_{jk}=0,\,1\leq i<j\leq d,\,1\leq k<l\leq d,
\end{equation}
\begin{equation}  \label{R4}
\gamma_{ij}\beta_{kl}-\gamma_{ik}\beta_{jl}+\gamma_{il}\beta_{jk}=0,\,1\leq i\leq d,\,1\leq j<k<l\leq d,
\end{equation}
\begin{equation}  \label{R5}
\beta_{ij}\beta_{kl}-\beta_{ik}\beta_{jl}+\beta_{il}\beta_{jk}=0,\quad 1\leq i<j<k<l\leq d.
\end{equation}
The vector space $K[U_{2d},V_{2d}]^{\delta}$
has a {\it canonical} linear basis consisting of the elements of the form
\begin{equation}  \label{B1}
v_{2i_1-1}\cdots v_{2i_m-1}\beta_{p_1q_1}\cdots \beta_{p_rq_r}
\gamma_{p'_1q'_1}\cdots \gamma_{p'_sq'_s}\alpha_{p''_1q''_1}\cdots \alpha_{p''_tq''_t}u_{2j_1-1}\cdots u_{2j_n-1}
\end{equation}
such that among the generators $\beta_{pq}$, $\gamma_{p'q'}$, and $\alpha_{p''q''}$ there is no \textit{intersection},
and no one \textit{covers} $v_{2i_k-1}$ or $u_{2j_l-1}$.

Note that each $\beta_{pq}$, $\gamma_{p'q'}$, and $\alpha_{p''q''}$ is identified with the open interval
$(p+d,q+d)$, $(p',q'+d)$, and $(p'',q'')$, respectively, on the real line. The generators \textit{intersect} each other if the corresponding
open intervals have a nonempty intersection and are not contained in each other. On the other hand, the generators
$v_{2i-1}$ and $u_{2j-1}$ are identified with the points $i+d$ and $j$, respectively.
We say also that a generator among $\beta_{pq}$, $\gamma_{p'q'}$, or $\alpha_{p''q''}$ \textit{covers}
$v_{2i-1}$ or $u_{2j-1}$ if the corresponding open interval covers the corresponding point.

The pairs of indices are ordered in the following way: $p_1 \leq \cdots \leq p_r$ and if $p_{\xi}=p_{{\xi}+1}$, then $q_{\xi}\leq q_{{\xi}+1}$;
$p'_1 \leq \cdots \leq p'_s$ and if $p'_{\mu}=p'_{{\mu}+1}$, then $q'_{\mu}\leq q'_{{\mu}+1}$;
$p''_1 \leq \cdots \leq p''_t$ and if $p''_{\sigma}=p''_{{\sigma}+1}$, then $q''_{\sigma}\leq q''_{{\sigma}+1}$;
and $i_1 \leq \cdots \leq i_m$, $j_1 \leq \cdots \leq j_n$.

In order to detect the constants in the commutator ideal $F_{2d}'$ it is sufficient to work in
the $K[U_{2d},V_{2d}]^{\delta}$-submodule $C_{2d}^{\delta}$ of $M_{2d}^{\delta}$, which is generated by 
\[
a_1,a_3,\ldots,a_{2d-1}
\]
and the determinants
\[
w_{pq}=a_{2p-1}v_{2q}-a_{2q}v_{2p-1}=\begin{vmatrix}
a_{2p-1} & a_{2q} \\
v_{2p-1} &v_{2q}
\end{vmatrix}, \quad p,q=1,\ldots,d.
\]
and spanned as a vector space on the elements of the form
\begin{equation}  \label{sp1}
a_{2i_0-1}v_{2i_1-1}\cdots v_{2i_m-1}\beta_{p_1q_1}\cdots \beta_{p_rq_r}
\gamma_{p'_1q'_1}\cdots \gamma_{p'_sq'_s}\alpha_{p''_1q''_1}\cdots \alpha_{p''_tq''_t}u_{2j_1-1}\cdots u_{2j_n-1}
\end{equation}
\begin{equation}  \label{sp2}
w_{p_0q_0}v_{2i_1-1}\cdots v_{2i_m-1}\beta_{p_1q_1}\cdots \beta_{p_rq_r}
\gamma_{p'_1q'_1}\cdots \gamma_{p'_sq'_s}\alpha_{p''_1q''_1}\cdots \alpha_{p''_tq''_t}u_{2j_1-1}\cdots u_{2j_n-1}
\end{equation}
for each $i_0,p_0,q_0=1\ldots,d$.

We also have the following relations in the algebra $C_{2d}^{\delta}$ as a consequence of \cite{DML}.
\begin{equation} \label{S}
a_{2i-1}\beta_{jk}-w_{ik}v_{2j-1}+w_{ij}v_{2j-1}=0,\quad 1\leq i\leq d, \,1\leq j<k\leq d,
\end{equation}
\begin{equation}  \label{R}
w_{ij}\beta_{kl}-w_{ik}\beta_{jl}+w_{il}\beta_{jk}=0,\quad 1\leq i\leq d, \,1\leq j<k<l\leq d.
\end{equation}

We denote by $L$ the $K[U_{2d},V_{2d}]^{\delta}$-submodule of $C_{2d}^{\delta}$
generated by the following elements
\begin{equation} \label{g1}
w_{ii},\quad \quad 1\leq i\leq d,
\end{equation}
\begin{equation} \label{g2}
w_{ij}+w_{ji},\quad 1\leq i<j\leq d,
\end{equation}
\begin{equation}\label{g3}
a_{2i-1}v_{2j-1}-a_{2j-1}v_{2i-1},\quad 1\leq i<j\leq d,
\end{equation}
\begin{equation}\label{g4}
a_{2i-1}\beta_{pq}-w_{pq}v_{2i-1},\, 1\leq i\leq d, \, 1\leq p<q\leq d,
\end{equation}
\begin{equation}\label{g5}
w_{ij}\beta_{pq}-w_{pq}\beta_{ij}, \, 1\leq i<j\leq d,  \, 1\leq p<q\leq d,
\end{equation}
\begin{equation} \label{g6}
a_{2i-1}\beta_{jk}-a_{2j-1}\beta_{ik}+a_{2k-1}\beta_{ij},\quad 1\leq i<j<k\leq d,
\end{equation}
\begin{equation}  \label{g7}
w_{kl}\gamma_{ij}-w_{jl}\gamma_{ik}+w_{jk}\gamma_{il},\,1\leq i\leq d,\,1\leq j<k<l\leq d.
\end{equation}
\begin{equation} \label{g8}
w_{jk}u_{2i-1}-a_{2j-1}\gamma_{ik}+a_{2k-1}\gamma_{ij},\, 1\leq i\leq d,\, 1\leq j<k\leq d.
\end{equation}
One can observe that the generating elements (\ref{g1})-(\ref{g8}) of $L$ are
the images of some elements in the commutator ideal $F_{2d}'$ of the free associative algebra $F_{2d}$.

\begin{theorem}
The $K[U_{2d},V_{2d}]^{\delta}$-submodule $L$ of $C_{2d}^{\delta}$ consists of all commutator elements in $C_{2d}^{\delta}$;
i.e., the images in $(F_{2d}')^{\delta}\subset F_{2d}'$.
\end{theorem}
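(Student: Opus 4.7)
The plan is to prove the double inclusion $L = (F_{2d}')^{\delta} \cap C_{2d}^{\delta}$, identifying $(F_{2d}')^{\delta}$ with its image under $\varepsilon$.

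For the inclusion $L \subseteq (F_{2d}')^{\delta}$, since $(F_{2d}')^{\delta}$ is itself a $K[U_{2d},V_{2d}]^{\delta}$-submodule of $M_{2d}$, it is enough to produce for each of (\ref{g1})--(\ref{g8}) a preimage in $F_{2d}'$. Using the explicit formula (\ref{commutator element}) one identifies (\ref{g1}) with $\varepsilon([x_{2i-1},x_{2i}])$, (\ref{g2}) with $\varepsilon([x_{2i-1},x_{2j}]+[x_{2j-1},x_{2i}])$, (\ref{g3}) with $\varepsilon([x_{2i-1},x_{2j-1}])$, while (\ref{g4})--(\ref{g8}) arise as images of suitable Jacobi-type combinations of triple or higher commutators in $F_{2d}'$ read off from (\ref{commutator element}).

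For the reverse inclusion, I would take an arbitrary commutator element $f \in C_{2d}^{\delta}$, written as a $K$-linear combination of the canonical spanning elements (\ref{sp1})--(\ref{sp2}), and perform a rewriting-to-normal-form procedure modulo $L$ using the generators themselves as rewriting rules. Namely, (\ref{g1}) eliminates $w_{ii}$; (\ref{g2}) enforces $w_{ij} \equiv -w_{ji}\pmod{L}$, restricting to $i<j$; (\ref{g3}) imposes an ordering between an $a_{2i_0-1}$-head and any adjacent odd $v$-factor; (\ref{g4})--(\ref{g5}) standardise $\beta$-factors that sit next to an $a$- or $w$-head; (\ref{g6}) is the Jacobi-type identity on $a$--$\beta$ triples; and (\ref{g7})--(\ref{g8}) perform the analogous reductions involving $\gamma$-factors. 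Ordering the spanning monomials lexicographically by (total degree, head-index, multiset of tail factors) makes every rewriting step strictly decreasing, so the procedure terminates at a canonical reduced representative of each coset in $C_{2d}^{\delta}/L$.

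The principal obstacle, and the core of the argument, is showing that the only reduced representative lying in the commutator subspace of $C_{2d}^{\delta}$ is zero. For this I would introduce the $K$-linear map
\[
\Phi : C_{2d}^{\delta} \longrightarrow K[U_{2d},V_{2d}], \qquad \Phi\Bigl(\sum_{k} a_k h_k\Bigr) = \sum_{k} v_k h_k,
\]
whose kernel on $C_{2d}^{\delta}$ is precisely the subspace of commutator elements by the characterisation following (\ref{commutator element}); the first inclusion already gives $L \subseteq \ker\Phi$. It remains to show $\ker\Phi \subseteq L$, which by the normal-form step reduces to injectivity of $\Phi$ on the subspace of reduced representatives. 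The latter is a combinatorial matching between reduced monomials (built from $a$-heads, $w$-heads, and tail factors drawn from the canonical basis of $K[U_{2d},V_{2d}]^{\delta}$) and the canonical basis elements (\ref{B1}) of $K[U_{2d},V_{2d}]^{\delta}$, carried out by induction on total degree and on head-index and using the defining relations (\ref{S1})--(\ref{R5}). This bookkeeping, which parallels the standard-monomial argument of Drensky--Makar-Limanov \cite{DML}, is where the genuine combinatorial work lies; once injectivity on reduced representatives is established, the proof is complete.
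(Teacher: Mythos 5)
Your overall architecture is sound and agrees in spirit with the method of the source to which the paper defers this result (the theorem is stated here without proof; the complete argument is in \cite{CF}): prove $L\subseteq\varepsilon\bigl((F_{2d}')^{\delta}\bigr)$ by exhibiting preimages of (\ref{g1})--(\ref{g8}), and prove the reverse inclusion by combining the substitution $a_i\mapsto v_i$ that characterises commutator elements after (\ref{commutator element}) with a reduction modulo $L$. The first inclusion is essentially complete: your identifications for (\ref{g1})--(\ref{g3}) are correct, and the preimages of (\ref{g4})--(\ref{g8}) are exactly the polynomials $g_4,\dots,g_8$ of Corollary \ref{generators in the ideal}, so only the (easy) remark that the commutator-element condition is $K[U_{2d},V_{2d}]^{\delta}$-linear is needed to conclude.

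The gap is in the reverse inclusion, and it is not a routine one: what you defer as ``combinatorial bookkeeping'' is the entire content of the theorem. Two claims are left unverified. First, that the rewriting system extracted from (\ref{g1})--(\ref{g8}) is terminating and complete, i.e.\ that every element of $C_{2d}^{\delta}$ is congruent modulo $L$ to a combination of reduced monomials; this is delicate because the rules must be applied after multiplication by arbitrary elements of $K[U_{2d},V_{2d}]^{\delta}$ and they interact with the internal relations (\ref{S}), (\ref{R}) and (\ref{S1})--(\ref{R5}), while several of your rules (e.g.\ (\ref{g2}), (\ref{g3}), (\ref{g5})) are symmetrisations or commutations for which total degree does not drop, so the well-foundedness of your order is not automatic. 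Second, and more seriously, the injectivity of $\Phi$ on the span of reduced monomials is essentially a restatement of the theorem: since $\Phi(a_{2i-1}h)=v_{2i-1}h$ and $\Phi(w_{pq}h)=\beta_{pq}h$ (note that this requires $w_{pq}=a_{2p-1}v_{2q}-a_{2p}v_{2q-1}$; with the formula as printed in the paper one gets $\Phi(w_{pq})=0$ and the matching with (\ref{B1}) collapses, so this normalisation must be fixed before your argument can run), each generator of $L$ is precisely a lift along $\Phi$ of one of the relations $\beta_{ii}=0$, $\beta_{ij}+\beta_{ji}=0$, (\ref{S3}), (\ref{S4}), (\ref{R4}) or of a commutativity identity, and the theorem asserts that these lifts generate the whole of $\ker\Phi\cap C_{2d}^{\delta}$. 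You assert the required bijection between reduced monomials and basis elements (\ref{B1}) by analogy with \cite{DML} but carry out no part of the induction, so nothing in the proposal excludes an additional, independent element of $\ker\Phi\cap C_{2d}^{\delta}$ lying outside $L$. Until that count is actually performed, the argument is a plan rather than a proof.
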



As a consequence of the previous result it is possible to get the full list of generators of $(F'_{2d})^\delta$ as a $K[U_{2d},V_{2d}]$-module.

\begin{corollary}\label{generators in the ideal}
The $K[U_{2d},V_{2d}]^{\delta}$-module $(F_{2d}')^{\delta}$ is generated by the following polynomials:
\[
g_1(i)=[x_{2i-1},x_{2i}],\quad \quad 1\leq i\leq d,
\]
\[
g_2(i,j)=[x_{2i-1},x_{2j-1}],\quad 1\leq i<j\leq d,
\]
\[
g_3(i,j)=[x_{2i-1},x_{2j}]+[x_{2j-1},x_{2i}],\quad 1\leq i<j\leq d,
\]
\[
g_4(i,p,q)=[x_{2i-1},x_{2p-1},x_{2q}]-[x_{2i-1},x_{2p},x_{2q-1}],\, 1\leq i\leq d, \, 1\leq p<q\leq d,
\]
\[
g_5(i,j,k)=[x_{2i-1},x_{2j-1},x_{2k}]-[x_{2i-1},x_{2k-1},x_{2j}]+[x_{2j-1},x_{2k-1},x_{2i}],
\]
\[
1\leq i<j<k\leq d,
\]
\[
g_6(i,j,p,q)=[x_{2i-1},x_{2p-1},x_{2j},x_{2q}]+[x_{2i},x_{2p},x_{2j-1},x_{2q-1}]
\]
\[
-[x_{2i-1},x_{2p},x_{2j},x_{2q-1}]-[x_{2i},x_{2p-1},x_{2j-1},x_{2q}],
\]
\[
1\leq i<j\leq d,\,1\leq p<q\leq d,
\]
\[
g_7(i,j,k,l)=x_{2i}[x_{2j-1},x_{2k-1},x_{2l}]+x_{2i-1}[x_{2j},x_{2k},x_{2l-1}]
\]
\[
-x_{2i}[x_{2j-1},x_{2k},x_{2l-1}]-x_{2i-1}[x_{2j},x_{2k-1},x_{2l}]
\]
\[
1\leq i\leq d,\,1\leq j<k<l\leq d,
\]
\[
g_8(i,j,k)=x_{2i}[x_{2j-1},x_{2k-1}]-x_{2i-1}[x_{2j},x_{2k-1}],
\]
\[
1\leq i\leq d,\, 1\leq j<k\leq d.
\]
\end{corollary}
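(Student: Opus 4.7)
The plan is to exploit the preceding theorem, which via the embedding $\varepsilon$ of Proposition \ref{embedding} identifies $(F_{2d}')^{\delta}$ with the $K[U_{2d},V_{2d}]^{\delta}$-submodule $L$ of $C_{2d}^{\delta}$. Since $L$ is generated by the elements (\ref{g1})--(\ref{g8}), the corollary reduces to exhibiting, for each such generator, an explicit preimage in $(F_{2d}')^{\delta}$ under $\varepsilon$. The polynomials $g_1(i),\ldots,g_8(i,j,k)$ listed in the statement are precisely these preimages.

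For each candidate $g_r$, two checks are in order. First, $g_r \in (F_{2d}')^{\delta}$: membership in $F_{2d}'$ is immediate since each $g_r$ is built out of Lie commutators (and, in $g_7$, $g_8$, out of a polynomial times a commutator), and $\delta(g_r)=0$ is a routine calculation using the Leibniz rule together with $\delta(x_{2i-1})=0$, $\delta(x_{2i})=x_{2i-1}$. For example, $\delta(g_1(i)) = [0,x_{2i}] + [x_{2i-1},x_{2i-1}] = 0$, and the two summands in $g_4(i,p,q)$ both differentiate to $[x_{2i-1},x_{2p-1},x_{2q-1}]$ and cancel; the same kind of pairwise cancellation is designed into $g_5,g_6,g_7,g_8$. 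Second, $\varepsilon(g_r)$ must coincide with the corresponding module generator (\ref{g1})--(\ref{g8}), modulo the module relations (\ref{S})--(\ref{R}). This is a direct application of formula (\ref{commutator element}); for instance $\varepsilon(g_1(i))=a_{2i-1}v_{2i}-a_{2i}v_{2i-1}=w_{ii}$ and $\varepsilon(g_3(i,j)) = (a_{2i-1}v_{2j}-a_{2j}v_{2i-1}) + (a_{2j-1}v_{2i}-a_{2i}v_{2j-1}) = w_{ij}+w_{ji}$, while the higher-order commutators in $g_5,\ldots,g_8$ expand by iterated use of (\ref{commutator element}) and the target expression is read off by collecting $a$- and $v$-terms.

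Once the correspondence between $\varepsilon(g_r)$ and (\ref{g1})--(\ref{g8}) is in place, the preceding theorem closes the argument: the $\varepsilon(g_r)$ generate $L$, and $L$ is exactly the $\varepsilon$-image of $(F_{2d}')^{\delta}$, hence $g_1,\ldots,g_8$ generate $(F_{2d}')^{\delta}$ as a $K[U_{2d},V_{2d}]^{\delta}$-module. The main technical nuisance is the second check: several of the $\varepsilon(g_r)$ do not literally equal the stated module generator but differ from it by a $K[U_{2d},V_{2d}]^{\delta}$-linear combination of other generators in the list (the relations (\ref{S}), (\ref{R}) and the defining relations (\ref{S1})--(\ref{R5}) of $K[U_{2d},V_{2d}]^{\delta}$ are used to bring the image to canonical form (\ref{sp1})--(\ref{sp2})). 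One must therefore verify that these correction terms lie in the submodule already spanned by $\varepsilon(g_1),\ldots,\varepsilon(g_8)$, so that no additional generators are required.
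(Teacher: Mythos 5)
Your proposal is correct and matches the paper's (implicit) argument: the paper offers no written proof of this corollary beyond the remark that the generators (\ref{g1})--(\ref{g8}) of $L$ are images of elements of $F_{2d}'$, and the intended reasoning is exactly what you describe — the $g_r$ are $\delta$-constants in $F_{2d}'$ whose $\varepsilon$-images, computed via (\ref{commutator element}) and reduced modulo the module relations, recover the generators of $L$, which by the preceding theorem is the $\varepsilon$-image of $(F_{2d}')^{\delta}$. Your closing caveat about verifying that the correction terms stay inside the submodule spanned by the $\varepsilon(g_r)$ is precisely the computation the paper leaves to the reader.
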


We have to add for the generating set of the whole algebra
$(F_{2d})^\delta$ the constants $x_{2i-1}$ and $x_{2i-1}x_{2j}-x_{2i}x_{2j-1}$, $1\leq i<j\leq d$, 
which are needed for the generation
of the factor algebra of $(F_{2d})^\delta$ modulo the commutator ideal of $F_{2d}$.
These generators are the ones lifted from the algebra $K[X_{2d}]^{\delta}$ of constants
of the polynomial algebra to the algebra $(F_{2d})^\delta$  by the fact stated in Corollary 4.3 of the paper \cite{DG} by Drensky and Gupta.

The following result gives an infinite generating set of the subalgebra of constants $(F_{2d})^\delta$ of the free metabelian associative algebra $F_{2d}$ as an algebra.

\begin{corollary}
The algebra $(F_{2d})^\delta$ of the constants is generated by
\[
x_1,x_3,\ldots,x_{2d-1},
\]
\[
x_{2i-1}x_{2j}-x_{2i}x_{2j-1},
\]
\[
g_1f_1,\ldots,g_8f_8,
\]
where $1\leq i<j\leq d$, $f_1,\ldots,f_8\in K[U_{2d},V_{2d}]^{\delta}$, and $g_i$s are as in Corollary \ref{generators in the ideal}.
\end{corollary}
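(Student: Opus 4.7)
The plan is to reduce the claim to a $\delta$-invariant vector-space decomposition of $(F_{2d})^\delta$ into a commutative part and a commutator part, and then invoke the classical Nowicki conjecture on the first piece and Corollary \ref{generators in the ideal} on the second.

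First I would fix the standard basis of $F_{2d}$: the ordered commutative monomials $x_1^{\xi_1}\cdots x_{2d}^{\xi_{2d}}$ together with the commutator basis
\[
x_1^{\xi_1}\cdots x_{2d}^{\xi_{2d}}[x_i,x_j,x_{j_1},\ldots,x_{j_m}]
\]
recalled in Section 5. This exhibits a vector-space decomposition $F_{2d}=K[X_{2d}]\oplus F_{2d}'$, with $K[X_{2d}]$ realized as the span of the ordered commutative monomials inside $F_{2d}$. Since $\delta$ acts either inside the commutative prefactor or inside the bracket but never alters the commutator length of a basis element, both summands are $\delta$-invariant, and therefore $(F_{2d})^\delta = K[X_{2d}]^\delta\oplus (F_{2d}')^\delta$.

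Next I would treat the two summands separately. By the (now-proved) classical Nowicki conjecture, $K[X_{2d}]^\delta$ is generated as a $K$-algebra by $x_1,x_3,\ldots,x_{2d-1}$ together with the determinants $x_{2i-1}x_{2j}-x_{2i}x_{2j-1}$; these are precisely the first two families in the statement. By Corollary \ref{generators in the ideal}, $(F_{2d}')^\delta$ is generated as a $K[U_{2d},V_{2d}]^\delta$-module by $g_1,\ldots,g_8$; reinterpreting the module action (where $u_i$ acts as left multiplication by $x_i$ and $v_i$ as right Lie bracketing with $x_i$) shows that the set $\{g_i f_i : 1\le i\le 8,\ f_i\in K[U_{2d},V_{2d}]^\delta\}$ spans $(F_{2d}')^\delta$ as a $K$-vector space.

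Finally I would combine the pieces: the subalgebra of $(F_{2d})^\delta$ generated by the listed elements contains $K[X_{2d}]^\delta$ by the classical Nowicki conjecture and contains a spanning set of $(F_{2d}')^\delta$ by the previous step, hence by the decomposition above it equals all of $(F_{2d})^\delta$. No extra generators are needed: the metabelian identity $(F_{2d}')^2=0$ kills any product of two elements of the form $g_i f_i$, while a product of an element of $K[X_{2d}]^\delta$ with a $g_i f_i$ lands again in $(F_{2d}')^\delta$ and is therefore already expressible within the given spanning set. The main obstacle will be verifying the $\delta$-invariance of the decomposition $F_{2d}=K[X_{2d}]\oplus F_{2d}'$ and confirming that the $K[U_{2d},V_{2d}]^\delta$-module action on $F_{2d}'$ translates faithfully into iterated left multiplications and right Lie brackets in $F_{2d}$; once this bookkeeping is in place, the corollary follows by direct assembly of the two ingredients.
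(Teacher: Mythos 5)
Your overall strategy (treat the commutative quotient by the classical Nowicki conjecture and the commutator ideal by Corollary \ref{generators in the ideal}) is the right one, but the decomposition you build it on is false. The span of the ordered commutative monomials $x_1^{\xi_1}\cdots x_{2d}^{\xi_{2d}}$ is a vector-space complement to $F_{2d}'$, yet it is \emph{not} $\delta$-invariant: for instance
\[
\delta(x_2^2)=x_1x_2+x_2x_1=2x_1x_2+[x_2,x_1],
\]
which has a nonzero component in $F_{2d}'$. Consequently $(F_{2d})^\delta\neq K[X_{2d}]^\delta\oplus(F_{2d}')^\delta$ with $K[X_{2d}]$ realized as that span. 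Concretely, $(x_1x_4-x_2x_3)^2$ is a constant of $F_4$ (being a product of constants), but its ordered-monomial component $x_1^2x_4^2-2x_1x_2x_3x_4+x_2^2x_3^2$ is not: its image under $\delta$ equals $x_1^2[x_4,x_3]+x_3^2[x_2,x_1]\neq 0$. So the "main obstacle" you flag at the end is not mere bookkeeping; the invariance you hope for genuinely fails, and with it the claim that the subalgebra generated by the listed elements literally contains the span-of-ordered-monomials copy of $K[X_{2d}]^\delta$ (products inside $F_{2d}$ pick up commutator corrections).

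The repair is to replace the direct sum by the exact sequence $0\to F_{2d}'\to F_{2d}\to K[X_{2d}]\to 0$. Given $f\in(F_{2d})^\delta$, its image $\bar f$ in $F_{2d}/F_{2d}'\cong K[X_{2d}]$ is a constant of the induced Weitzenb\"ock derivation, hence by the classical Nowicki conjecture a polynomial $P$ in $x_1,x_3,\ldots,x_{2d-1}$ and the determinants. The crucial point, which the paper gets from Corollary 4.3 of \cite{DG}, is that these generators lift to honest constants of $F_{2d}$; here this is even explicit, since $\delta(x_{2i-1}x_{2j}-x_{2i}x_{2j-1})=x_{2i-1}x_{2j-1}-x_{2i}\cdot 0 - \cdots =0$ already in the noncommutative algebra. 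Evaluating $P$ on these lifts gives a constant $\tilde P$ with $f-\tilde P\in F_{2d}'\cap(F_{2d})^\delta=(F_{2d}')^\delta$, and the latter is spanned by the $g_if_i$ exactly as you argue from Corollary \ref{generators in the ideal}. Your closing observations (that $(F_{2d}')^2=0$ kills products of the $g_if_i$, and that products of the lifted generators with elements of $(F_{2d}')^\delta$ stay in $(F_{2d}')^\delta$) remain valid and complete the assembly once the decomposition step is corrected in this way.
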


\section{The variety $\mathcal{G}$ generated by the Grassmann algebra}
In this section we shall show results concerning Nowicki's conjecture of the relatively free algebra of the infinite dimensional Grassmann algebra.

The variety $\mathcal{G}$ consists of all associative unitary algebras satisfying
the polynomial identity $[z_1,z_2,z_3]\equiv0$.
We shall set $F_{2l}:=F_{2l}(\mathcal{G})$ for $0\leq l\leq d$.
As noted in the Introduction, $\mathcal{G}$ is the variety generated by
the infinite dimensional Grassmann algebra $G$ and $F_{2d}$ coincides
with the $2d$-generated relatively free algebra in the variety $\mathcal{G}$. 
The identities of $G$ and related topics have been studied by several authors.
See for example the paper \cite{KR} by Krakovski and Regev about the ideal of polynomial identities of $G$.

Let $X=\{x_1,x_2,\ldots,x_{d}\}$, $Y=\{y_1,y_2,\ldots,y_{d}\}$ be two disjoint sets of variables. Of course the relatively free algebra of $\mathfrak{G}$ of rank $2d$ in the variables from $U:=X\cup Y$ is isomorphic to $F_{2d}$ and we consider the following order inside $U$: \[x_1<y_1<x_2<\cdots<x_d<y_d.\] 

Moreover we say a polynomial $f$  is \textit{homogeneous in the set of variables} $S=\{u_{1},\ldots,u_s\}$ if for each monomial $m$ appearing in $f$ we have $\sum_{i=1}^s\deg_{u_i}m$ is the same.

It is well known (see for example Theorem 5.1.2 of \cite{D}) $F_{2d}$ has a basis consisting of all
\[
x_1^{a_1}y_1^{b_1}\cdots x_{d}^{a_{2d}}y_{d}^{b_{2d}}[u_{i_1},u_{i_2}]\cdots[u_{i_{2c-1}},u_{i_{2c}}],\]\[a_i,b_j\geq0,\ ,u_{i_l}\in U,\ u_{i_1}<u_{i_2}<\cdots<u_{i_{2c}},\ c\geq0.\]

We recall the identity $[z_1,z_2,z_3]\equiv0$ implies the identity
\begin{equation}\label{proof Grass}
[z_1,z_2][z_3,z_4]\equiv-[z_1,z_3][z_2,z_4]
\end{equation}
in $F_{2d}$.

Consider the following Weitzenb\"ock derivation $\delta$ of $F_{2d}$ acting on 
 $U$ such that
\[
\delta(y_{i})=x_{i}, \ \delta(x_i)=0,\quad 1\leq i\leq d.
\]

Let $\alpha=(\alpha_1,\cdots,\alpha_{d-1})\in K^{d-1}$ and consider the algebra endomomorphism $\phi_{\alpha}$ of $F_{2d}$ such that \[\phi_\alpha(x_i)=x_i,\ \ \phi_\alpha(y_i)=y_i,\ \ i=1,\ldots,d-1,\]
\[\phi_\alpha(x_d)=\sum_{i=1}^{d-1}\alpha_ix_i,\ \ \phi_\alpha(y_d)=\sum_{i=1}^{d-1}\alpha_iy_i.\] Notice that $\phi_\alpha$ commutes with $\delta$. Hence if $f\in F_{2d}^\delta$, then $\phi_\alpha(f)\in F_{2d-2}^\delta$ too. Moreover, if $f\in F_{2d}$ is homogeneous with respect to the set $\{x_d,y_d\}$ and $\phi_\alpha(f)=0$ for some non-zero $\alpha\in K^{d-1}$, then $f$ is in the left ideal generated by \[\omega_\alpha:=\left(\sum_{i=1}^{d-1}\alpha_ix_i\right)y_d-\left(\sum_{i=1}^{d-1}\alpha_iy_i\right)x_d,\ \ \ [\omega_\alpha,u],\ \ u\in U,\]\[\mu_\alpha:=\left[x_d,\sum_{i=1}^{d-1}\alpha_ix_i\right],\ \ \nu_\alpha:=\left[y_d,\sum_{i=1}^{d-1}\alpha_iy_i\right].\]

We define the following objects inside
$F_{2d}$:
\[
v_{ij}:=x_iy_j-y_ix_j, \quad 1\leq i,j\leq d,
\]
\[w_{ijk}:=y_i[x_j,x_k]-x_i[y_j,x_k], \quad 1\leq i,j,k\leq d,\]
\[z_{ijkl}:=y_i[x_j,v_{k,l}]-x_i[y_j,v_{k,l}], \quad 1\leq i,j,k,l\leq d,\]

Notice that the $v_{ij}$'s, the $w_{ijk}$'s and the $z_{ijkl}$'s are constants of $F_{2d}$ and starting from these objects we shall construct three subsets of elements of $F_{2d}$. We set \[V:=\{v_{ij}|1\leq i,j\leq d\},\]\[W_0:=\{w_{ijk}|1\leq i,j\leq k\leq d\}\] and \[Z_0:=\{z_{ijkl}|1\leq i\leq j\leq k\leq l\leq d\}.\] Suppose $s>0$ and we set \[W_s:=\{y[x,w]-x[y,w]|w\in W_{s-1},\ x\in X,\ y\in Y\},\] \[Z_s:=\{y[x,z]-x[y,z]|z\in Z_{s-1}\ x\in X,\ y\in Y\}.\]
As above, both the $W_s$'s and the $Z_s$'s are subsets of constants. Moreover it can be easily seen for $s\geq d$ both $V_s$ and $W_s$ are 0. We shall denote by $\mathcal{C}$ the algebra generated by the non-zero $X,V,W_l,Z_l$. Here we have the main result. We recall the proof of the following follows the main steps of the proof of the classical Nowicki's conjecture presented in Section 3. We also point out this proof cannot be adapted to algebras having a non $T$-prime ideal of their polynomial identities.

\begin{theorem}
The algebra of constants $F^\delta_{2d}$ is finitely generated as an algebra, and its generators are:
\begin{equation}\label{grass1} X;\end{equation}\begin{equation}\label{grass2} V,\end{equation}
\begin{equation}\label{grass3} W_{l},\ l\leq d-1; \end{equation}
\begin{equation}\label{grass4} Z_{l},\ l\leq d-1. \end{equation}
\end{theorem}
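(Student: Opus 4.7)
I proceed by adapting the Drensky--Makar-Limanov argument of Section 3 to the non-commutative ambient $F_{2d}$, exploiting the fact that the identity $[z_1,z_2,z_3]\equiv0$ makes every commutator central. The proof runs as a double induction: an outer induction on $d$, and an inner induction on the total degree $k=\deg_{\{x_d,y_d\}}(f)$, which is preserved by $\delta$. For the base case $d=1$, the algebra $F_2$ has basis $\{x_1^ay_1^b,\,x_1^ay_1^b[x_1,y_1]\}$, so $F_2^\delta=K[x_1]+K[x_1]\cdot[x_1,y_1]$, and this sits inside the subalgebra generated by $X=\{x_1\}$ and $v_{11}=[x_1,y_1]\in V$.

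For the inductive step, the first task is to reduce to the commutator ideal. The abelianization $F_{2d}\to F_{2d}/F_{2d}'\cong K[X,Y]$ is $\delta$-equivariant, and the classical Nowicki conjecture (Section 3) identifies its target of constants with $K[X\cup\{u_{ij}\}]$. Since each $v_{ij}\in F_{2d}^\delta$ is a lift of $u_{ij}$ and $x_i$ lifts $x_i$, one can subtract from $f$ a polynomial expression in $X\cup V$ so that the remainder lies in $(F_{2d}')^\delta$. Hence it is enough to prove the theorem for constants in the commutator ideal.

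Now assume $f\in(F_{2d}')^\delta$ and run the inner induction on $k$. If $k=0$, then $f\in(F_{2d-2}')^\delta$ and the outer induction applies. For $k>0$, write $f$ in the canonical basis and carry out the DML-style reduction. The key moves are
\begin{align*}
x_iy_d &= v_{id}+y_ix_d,\\
y_i[x_j,x_d]-x_i[y_j,x_d] &= w_{ijd},\\
y_i[x_j,v_{kd}]-x_i[y_j,v_{kd}] &= z_{ijkd},
\end{align*}
together with the iterated analogues coming from the recursive definitions of $W_\ell$ and $Z_\ell$. Each move trades an occurrence of $y_d$ for a factor of $x_d$ modulo a generator. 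Applying them to every summand rewrites $f$ as an expression in $X,V,W_\ell,Z_\ell$ plus a term of the form $x_d\cdot g$ with $\deg_{\{x_d,y_d\}}(g)<k$; because $x_d$ is a constant and the extracted piece is a constant, $g$ is again a constant, and the inner induction closes the argument.

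The principal obstacle is the combinatorial bookkeeping inside the commutator ideal. Unlike the purely commutative DML argument, applying $x_iy_d=v_{id}+y_ix_d$ inside a bracket produces nested commutators of the form $[v_{id},\cdot]$, and these are precisely the ingredients that the recursive families $W_\ell,Z_\ell$ are designed to absorb. The alternating identity \eqref{proof Grass} keeps the normal forms manageable and, crucially, forces the recursion to terminate at depth $d-1$: once $\ell\geq d$, the product of bracket factors must repeat a variable and \eqref{proof Grass} makes the term vanish. This is exactly what yields the finiteness of the generating set.
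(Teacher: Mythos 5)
Your overall strategy is the right one --- the same DML-style double induction on $d$ and on $\deg_{\{x_d,y_d\}}$ that the paper uses, with the substitution $x_iy_d=v_{id}+y_ix_d$ and its bracketed analogues doing the work --- and your preliminary reduction to the commutator ideal via the abelianization is a legitimate variation (the paper treats the whole algebra at once). But the central step is asserted rather than proved, and the assertion hides the point on which the whole argument turns. To ``trade an occurrence of $y_d$ for a factor of $x_d$ modulo a generator'' you need, for each of the $p$ factors $y_d$ in the leading part of $f$, an available partner from $X$; this is exactly the inequality $\deg_X(f)\geq p+\deg_{Y'}(a_p)$, where $a_p$ is the coefficient of $y_d^p$. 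Nothing in your write-up supplies it. The paper obtains it from the specialization endomorphisms $\phi_\alpha$ ($x_d\mapsto\sum\alpha_ix_i$, $y_d\mapsto\sum\alpha_iy_i$), which commute with $\delta$, via $\deg_X(f)=\deg_{X'}(\phi_\alpha(f))\geq\deg_{Y'}(\phi_\alpha(f))=\deg_Y(f)$, and it must then separately dispose of the degenerate case $\phi_\alpha(f)=0$ for all $\alpha$, where $f$ lies in the left ideal generated by $\omega_\alpha$, $[\omega_\alpha,u]$, $\mu_\alpha$, $\nu_\alpha$. Your proof has no counterpart to either of these.

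A second gap is logically prior: you apply the rewriting ``to every summand,'' but the individual summands of $f$ are not constants, so after rewriting there is no reason the leftover pieces should assemble into $x_d\cdot g$ with $g$ again a constant. The paper first extracts from the homogeneous components of $\delta(f)=0$ the fact that the extremal coefficients ($a_p$, $a_{p-1}'''$, $a'_{p,x_s}$) are themselves constants in the smaller variable set, expresses them by the outer induction as polynomials in $X',V',W',Z'$ times monomials $x_{k_1}\cdots x_{k_p}$, and only then performs the pairing on this leading part; that $g'$ and the $h_i$ in the residue $f=c+g'x_d+\sum h_i[x_i,x_d]$ are constants then follows because $x_d$ and $[x_i,x_d]$ are. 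Your termination argument for the recursion ($W_s=Z_s=0$ for $s\geq d$ because a product of more than $d$ commutator factors must repeat a variable and the identity $[z_1,z_2][z_3,z_4]\equiv-[z_1,z_3][z_2,z_4]$ kills it) is correct, but it only explains why the proposed generating set is finite, not why it generates.
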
 

\begin{proof}
It is easy to see that the elements from (\ref{grass1})-(\ref{grass4}) are constants of ${F}_{2d}$.

Let $f=f(X',Y',x_d,y_d)\in F_{2d}^\delta$, then we may assume $f$ being homogeneous in $U$ and in $\{x_d,y_d\}$. We shall prove the theorem by induction on $d$ and on the total degree with respect to the set $\{x_d,y_d\}$. Suppose $d=1$, then \[f=\sum_{k=0}^n\alpha_k x^ky^{n-k}+\sum_{j=1}^{n-1}\beta_j x^{j-1}y^{n-j-1}[x,y].\] This means \[0=\delta(f)=\sum_{k=0}^{n-1}\alpha_k(n-k)x^{k+1}y^{n-k-1}\]\[+[x,y]\sum_{k=1}^{n-2}(n-k-1)\left(\alpha_k\frac{n-k}{2}x^{k-1}y^{n-k-1}+\beta_k x^ky^{n-k-2}\right).\] The previous relation gives us $\alpha_k=0$ for $k\leq n-1$, then $\beta_j=0$ for $1\leq j\leq n-2$. Thus the only possibility is $f=\alpha_nx^n+\beta_{n-1}x^{n-2}[x,y]$ and we are done.

Assume now $d>1$ and the result true for $F_{2d-2}$. We set $U'=X'\cup Y'$. If $v\in F_{2d-2}^\delta$ is homogeneous in $U$, then we shall study only the case $\deg_{x_d,y_d}(f)>0$.

Remark we can write $f$ in the following way:\[f=a_py_d^p+\sum_{u_0}a'_{p,u_0}[u_0,y_d]y_d^{p-1}+a_{p-1}x_dy_d^{p-1}+\sum_{u^1_1}a'_{p-1,u^1_1}[u^1_1,x_d]y_d^{p-1}\]\[+\sum_{u^2_1}a^{''}_{p-1,u^2_1}[u^2_1,y_d]x_dy_d^{p-2}+a^{'''}_{p-1}[x_d,y_d]y_d^{p-2}+\cdots+a_0x_d^p+\sum_{u_p}a'_{0}[u_p,x_d]x_d^{p-1},\] where the $u^i_j$'s are in $U$ whereas the $a_j$'s, the $a'_{r,u^i_j}$'s, the $a^{''}_{s,u^i_j}$ and the $a^{'''}_{t,u^i_j}$'s all belong to $F_{2d-2}$. Deriving $f$ we get $\delta(a_p)=0$, hence:

\begin{equation}\label{obs1}
a_p\in F_{2d}^\delta. 
\end{equation}
We also have $\frac{p(p-1)}{2}a_p+\delta(a^{'''}_{p-1})=0$, then by (\ref{obs1}) we get $\delta(a^{'''}_{p-1})=0$. This means:
\begin{equation}\label{obs2}
a^{'''}_{p-1}\in F_{2d}^\delta. 
\end{equation}
Suppose now $u_0=y_s$, $s\leq d-1$, then $a'_{p,u_0}=0$. So we can assume $u_0=x_s$, $s\leq d-1$. In this case we get $\delta(a'_{p,x_s})=0$, i.e.:
\begin{equation}\label{obs3}
a'_{p,x_s}\in F_{2d}^\delta.\end{equation}
By (\ref{obs1}), (\ref{obs2}) and (\ref{obs3}), using induction, we get \[a_p=\sum b_1(X',V',W',Z'),\]\[a^{'''}_{p-1}=\sum b_2(X',V',W',Z'),\]\[a'_{p,x_i}=\sum b_{3,i}(X',V',W',Z').\]

If $\phi_\alpha(f)=0$ for all $\alpha=(\alpha_1,\ldots,\alpha_{d-1})$, then we get $d=2$ and $f$ is in the left ideal generated by $v_{12}$ and $[v_{12},u]$, $u\in U$. Hence $f=gv_{12}+\sum_{u\in U}g_u[v_{12},u]$ and we apply inductive arguments to $g$ and $g_u$'s and we are done. Now we consider the case $\phi_\alpha(f)\neq0$ for some non-zero $\alpha\in K^{n-1}$. The next two remarks are crucial. First we have \[\deg_X(f)=\deg_{X'}(\phi_\alpha(f))\geq \deg_{Y'}(\phi_\alpha(f))=\deg_{Y}(f).\] Hence \[\deg_X(f)=\deg_{X'}(a_p)\geq\deg_Y(f)=p+\deg_{Y'}(a_p)\] so \[a_p=\sum b'_1(X',V',W',Z')x_{k_1}\cdots x_{k_p}.\] We can argue analogously and obtain \[a^{'''}_{p-1}=\sum b'_2(X',V',W',Z')x_{l_1}\cdots x_{l_p},\]\[a'_{p,x_i}=\sum b_{3,i}(X',V',W',Z')x_{m_1}\cdots x_{m_p}.\]

Now we rewrite $a_py_d^p$, $a^{'''}_{p-1}[x_d,y_d]y_d^{p-2}$ and $a'_{p,x_i}[x_i,y_d]y_d^{p-1}$ and we get \[f=c+gx_d+\sum_{i=1}^{d-1}m_{p,i}y_d[x_i,x_d],\] where $c\in\mathcal{C}$. We can rewrite the last summand as $g_1x_d+\sum_{i=1}^{d-1}h_i[x_i,x_d]$, where $\deg_{x_d}h_i=0$ and we get finally \[f=c+g'x_d+\sum_{i=1}^{d-1}h_i[x_i,x_d].\] Hence $g'$ and the $h_i$'s are constants too because $x_d$ and $[x_i,x_d]$ are. Now we are allowed to apply induction on $g'$ and the $h_i$'s because their degrees with respect to the set $\{x_d,y_d\}$ are strictly smaller than the one of $f$ and we conclude the proof.
\end{proof}

\section{The Nowicki conjecture for the free metabelian Poisson algebras}
This section presents a new result and is devoted to the study of Nowicki's conjecture in a nonassociative setting. In particular, we will handle the case of a free metabelian Poisson algebra and we shall give an explicit set of basis elements of the vector space of its constants.

Let $K$ be a field of characteristic zero. Let $(P,\cdot)$ and $(P,[\ ,\ ])$ be a commutative associative algebra and a Lie algebra structure on the $K$-vector space $P$, respectively, over $K$ such that the identity
\[
[a,b\cdot c]=[a,b]\cdot c+b\cdot [a,c]=[a,b]\cdot c+[a,c]\cdot b
\]
holds in $P$. Then $(P,\cdot,[\ ,\ ])$ is called a \textit{Poisson algebra}.

We will consider throughout the section only left normed commutators, i.e., $[a,b,c]:=[[a,b],c]$.
If $a\cdot b=[a,b]=0$ holds in $P$, then the Poisson algebra $P$ is called \textit{abelian} and any extension of $P$ by an abelian Poisson algebra is called \textit{metabelian} (see e.g. \cite{AM}). 

Let us consider the free metabelian Poisson algebra $P_{2n}$ over $K$ of rank $2n$ generated by the set
\[
Z_{2n}=\{z_1,\ldots,z_{2n}\}=\{x_1,\ldots,x_n,y_1,\ldots,y_n\}=X_n\cup Y_n.
\]
It is well known (see e.g. \cite{Zh}) that the following identities hold in $P_{2n}$:
\begin{align}
0&=a\cdot b\cdot c\cdot d=[a\cdot b, c\cdot d]=[[a,b], c\cdot d]\nonumber\\
&=[a,b]\cdot c \cdot d=[a,b]\cdot[c,d]=[[a,b],[c,d]],\nonumber
\end{align}
which imply the following ones:
\[
[a,b,c]=[a,c,b]-[b,c,a]
\]
\[
[a,b,c]\cdot d=[d,b,c]\cdot a-[c,b,d]\cdot a
\]
\[
[a,b]+[b,a]=0 \ , \ [a,b,c,d]=[a,b,d,c]
\]
\[
[a,b,c]\cdot d=-[a,b,d]\cdot c \ , \ [a,b,c]\cdot c=0.
\]

We put an order "$<$" in the generating set so that
\[
z_1<\cdots<z_{2n}
\]
or
\[
x_1<\cdots<x_n<y_1<\cdots<y_n. 
\]
It is well known (see \cite{Zh}) ${\mathcal B}_{1}\cup {\mathcal B}_{2}\cup {\mathcal B}_{3}\cup {\mathcal B}_{4}$
forms a $K$-basis for the free metabelian Poisson algebra $P_{2n}$,
where
\begin{align}
&{\mathcal B}_{1}=Z_{2n}\cup\{[z_{i_1},\dots,z_{i_k}]\mid i_1>i_2\leq i_3\leq\cdots\leq i_k, 2\leq k \},\nonumber\\
&{\mathcal B}_{2}=\{z_i\cdot z_j\ ,\ z_i\cdot z_j\cdot z_k\mid 1\leq i\leq j\leq k\leq 2n\},\nonumber\\
&{\mathcal B}_{3}=\{[z_i, z_j]\cdot z_k\mid 1\leq j\leq i\leq 2n \ , \ 1\leq k\leq 2n\},\nonumber\\
&{\mathcal B}_{4}=\{[z_i, z_j, z_k]\cdot  z_l \mid 1\leq j<i\leq l\leq 2n\ ,\ 1\leq j\leq k<l\leq 2n\}.\nonumber
\end{align}

Now consider the linear nilpotent derivation $\delta:y_i\to x_i\to0$, $i=1,\ldots,n$. Clearly the spaces $K{\mathcal B}_{i}$, $i=1,2,3,4$,
are $\delta$-invariant, i.e., $(K{\mathcal B}_{i})^{\delta}\subset K{\mathcal B}_{i}$, then
\[
P_{2n}^{\delta}=(K{\mathcal B}_{1})^{\delta}\oplus (K{\mathcal B}_{2})^{\delta}\oplus (K{\mathcal B}_{3})^{\delta}\oplus (K{\mathcal B}_{4})^{\delta}.
\]
Note that $K{\mathcal B}_{1}=L_{2n}$ is the free metabelian Lie algebra; moreover, by \cite{DF1}, we know the generators of the algebra $(L_{2n}')^{\delta}$
as a $K[X_n,Y_n]^{\delta}$-module:

\begin{theorem}\label{principal}
The $K[X_n,Y_n]^{\delta}$-module $(L_{2n}')^{\delta}$ is generated by the following elements
\[
[x_i,y_i],\quad \quad 1\leq i\leq n,
\]
\[
[x_i,x_j],\quad 1\leq i<j\leq n,
\]
\[
[x_i,y_j]+[x_j,y_i],\quad 1\leq i<j\leq n,
\]
\[
[x_i,x_p,y_q]-[x_i,y_p,x_q],\, 1\leq i\leq n, \, 1\leq p<q\leq n,
\]
\[
[x_i,x_j,y_k]-[x_i,x_k,y_j]+[x_j,x_k,y_i],\quad 1\leq i<j<k\leq n,
\]
and
\[
[x_i,x_p,y_j,y_q]+[y_i,y_p,x_j,x_q]-[x_i,y_p,y_j,x_q]-[y_i,x_p,x_j,y_q],
\]
where $1\leq i<j\leq n$, $1\leq p<q\leq n$.
\end{theorem}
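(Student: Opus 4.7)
The approach will closely mirror the strategy for the free metabelian associative case of Section 5. The starting point is a Shmel'kin-type embedding of $L_{2n}$ into a wreath product $K[X_n,Y_n]\rtimes M_{2n}$, where $M_{2n}$ is the free $K[X_n,Y_n]$-module on generators $a_1,\ldots,a_n,b_1,\ldots,b_n$ (corresponding respectively to $x_1,\ldots,x_n,y_1,\ldots,y_n$) equipped with trivial multiplication $M_{2n}\cdot M_{2n}=0$. Denoting by $u_i$, $v_i$ the commuting variables corresponding to $x_i$, $y_i$, the commutator ideal $L_{2n}'$ embeds into $M_{2n}$ and the analogue of formula (\ref{commutator element}) reads
\[
[z_{i_1},z_{i_2},z_{i_3},\ldots,z_{i_k}]\longmapsto (a_{i_1}w_{i_2}-a_{i_2}w_{i_1})w_{i_3}\cdots w_{i_k},
\]
where $a_{i_j}\in\{a_s,b_s\}$ and $w_{i_j}\in\{u_s,v_s\}$ match $z_{i_j}\in\{x_s,y_s\}$. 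The image of $L_{2n}'$ inside $M_{2n}$ is cut out by a single linear syzygy: an element $\sum a_i f_i+\sum b_j g_j\in M_{2n}$ comes from $L_{2n}'$ precisely when $\sum u_i f_i+\sum v_j g_j=0$.

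First, I would extend the Weitzenb\"ock derivation to $M_{2n}$ by $\delta(b_i)=a_i$, $\delta(a_i)=0$, $\delta(v_i)=u_i$, $\delta(u_i)=0$, and invoke Theorem \ref{theorem for finite generation} to guarantee that $M_{2n}^{\delta}$ is a finitely generated $K[X_n,Y_n]^{\delta}$-module. Then I would transport the classical Nowicki conjecture to $M_{2n}^\delta$: using the proof technique of Drensky and Makar-Limanov (summarised in Section 3), one produces an explicit finite generating set of $M_{2n}^\delta$ consisting of the free-module generators $a_{2i-1}$, $b_{2i-1}$ (up to the appropriate re-indexing of $X_n,Y_n$ into odd/even blocks as in Section 5) together with the $a$-$v$, $a$-$u$, $b$-$v$, $b$-$u$ determinants, completely analogous to the generators $w_{pq}$ used in the associative case.

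Next, I would intersect this generating set with the submodule $\operatorname{Im}(L_{2n}'\hookrightarrow M_{2n})$ defined by the syzygy $\sum u_if_i+\sum v_jg_j=0$. This amounts to solving a linear system over $K[X_n,Y_n]^\delta$ whose unknowns are coefficients of the module generators of $M_{2n}^\delta$. Each solution, lifted back to $L_{2n}'$ via the embedding, yields one of the six displayed generators in the statement; in particular, the antisymmetric forms $[x_i,y_j]+[x_j,y_i]$, the Jacobi-like trinomials $[x_i,x_j,y_k]-[x_i,x_k,y_j]+[x_j,x_k,y_i]$, and the length-four generator $[x_i,x_p,y_j,y_q]+[y_i,y_p,x_j,x_q]-[x_i,y_p,y_j,x_q]-[y_i,x_p,x_j,y_q]$ all appear as the minimal syzygy solutions of the respective bidegrees.

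The main obstacle is the exhaustivity check at the last step: showing that the listed elements and their $K[X_n,Y_n]^{\delta}$-multiples already span $\operatorname{Im}(L_{2n}'^\delta)$ and that no further independent generator hides in a higher bidegree, in particular in bidegree $(2,2)$ which produces the genuinely new length-four relation. The cleanest way to settle this is a double induction, paralleling the inductive scheme of Section 3: an outer induction on $n$ (using the substitution homomorphism $\phi_\alpha$ as in the Grassmann section to reduce from $n$ to $n-1$) and an inner induction on the total degree in the pair $(x_n,y_n)$. At each step one isolates the leading coefficient in $y_n^p$, uses the Nowicki rewriting trick $x_{s}y_n=(x_sy_n-x_ny_s)+x_ny_s$ to express it through already known constants and the determinants $v_{sn}=x_sy_n-x_ny_s$, and recognises the resulting correction terms as $K[X_n,Y_n]^\delta$-combinations of the six families above. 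Verifying that the Jacobi and metabelian identities close this combinatorics without producing further primitive generators is the delicate part and will occupy the bulk of the argument.
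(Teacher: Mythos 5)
This paper does not actually prove Theorem \ref{principal}: it is imported wholesale from the cited work of Drensky and F{\i}nd{\i}k on the Nowicki conjecture for free metabelian Lie algebras, so there is no in-text proof to measure you against. That said, your framework is the right one and is exactly the one used both in that source and in the associative analogue of Section 5: the Shmel'kin-type embedding of $L_{2n}'$ into a free $K[X_n,Y_n]$-module $M_{2n}$, the single linear syzygy $\sum u_if_i+\sum v_jg_j=0$ cutting out the image of the commutator ideal (the Lie analogue of the criterion following (\ref{commutator element})), the determination of $M_{2n}^{\delta}$ via the Drensky--Makar-Limanov description of constants of a free module (kernel generators plus determinants), and finally the intersection of $M_{2n}^{\delta}$ with the syzygy submodule. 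Your identification of the six displayed families as the low-bidegree syzygy solutions is also correct.

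The gap is that the entire content of the theorem --- that every solution of the syzygy equation inside $M_{2n}^{\delta}$ is a $K[X_n,Y_n]^{\delta}$-combination of those six families --- is deferred to a final ``exhaustivity check'' that you only gesture at. Moreover, the tool you propose for it, an outer induction on $n$ driven by the substitution $\phi_\alpha$, is borrowed from Section 6, where it serves a different purpose (algebra generation in the Grassmann variety, exploiting the left ideal generated by $\omega_\alpha$); you give no indication of how that induction would be made compatible with a module-generation statement over $K[X_n,Y_n]^{\delta}$, and it is not the mechanism used in the source. What actually closes the argument is more direct and unavoidable: write a general element of $M_{2n}^{\delta}$ in terms of the canonical linear basis of $K[X_n,Y_n]^{\delta}$ (the analogue of (\ref{B1})), impose the syzygy coefficientwise in that basis, and verify multidegree by multidegree --- using the relations of type (\ref{S1})--(\ref{R5}) among the determinants --- that the resulting linear conditions force membership in the span of the six families. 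Without that computation, or a genuine substitute for it, your proposal only establishes that the listed elements are constants lying in the image of $L_{2n}'$, not that they generate $(L_{2n}')^{\delta}$.
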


By Nowicki's conjecture we get $(K{\mathcal B}_{2})^{\delta}$ is spanned on $X_n$ and the elements of the form
\[
u_{i,j}=x_i\cdot y_j-x_j\cdot y_i \ , \ 1\leq i<j<k\leq n
\]
and
\[
u_{i,j}\cdot x_k \ , \ 1\leq i<j<k\leq n \ , \ 1\leq k\leq n
\]
with the following relations:
\[
x_i\cdot u_{j,k}-x_j\cdot u_{i,k}+x_k\cdot u_{i,j}=0 \ , \ 1\leq i<j<k\leq n.
\]

We want to highlight the fact that once again some "determinants" (the $u_{i,j}$ above) appear as generators of the algebra of constants as in the classical Nowicki's conjecture and its generalization. 

In the sequel we are going to show an explicit basis of $KB_3^\delta$ and $KB_4^\delta$ as vector spaces. Then the explicit basis of $P_{2n}^\delta$ will be an obvious corollary.

\begin{theorem}\label{KB_3}
The $K$-vector space $(K{\mathcal B}_3)^{\delta}$ is of a basis consisting of elements of the form
\[
[x_i, x_j]\cdot x_k \ , \  j<i,
\]
\[
[y_i, x_i]\cdot x_j \ , \  i,j,
\]
\[
[x_i, x_j]\cdot  y_k+[y_j, x_i]\cdot  x_k \ , \  j<i<k,
\]
\[
[y_i, x_j]\cdot  x_k+[y_j, x_i]\cdot  x_k \ , \  j<i<k.
\]
\end{theorem}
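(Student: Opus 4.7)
My plan is to use two natural gradings on $K\mathcal{B}_3$: the multi-degree $\mathbf{d} = (d_1, \ldots, d_n)$ in the variable pairs $\{x_i, y_i\}$ and the $y$-degree (total number of $y$'s appearing). Since $\delta$ preserves the multi-degree and strictly decreases the $y$-degree by one, the space of constants decomposes as a direct sum over multi-degree components, and within each such component one analyzes separately the finite-dimensional kernel of $\delta$ acting between consecutive $y$-degree strata.

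First I would verify by direct computation that each of the four listed families is $\delta$-invariant: the first family is trivial since $\delta$ annihilates every $x_i$; the second uses $\delta[y_i, x_i] = [x_i, x_i] = 0$; for the third and fourth families, the relation $\delta[y_j, x_i] = [x_j, x_i]$ together with the antisymmetry $[x_i, x_j] + [x_j, x_i] = 0$ and the Leibniz rule for the Poisson product yields the cancellation. Linear independence is then routine: the first two families consist of single $\mathcal{B}_3$-basis elements with distinct parameters, and each element of the third and fourth families contains a distinguished $\mathcal{B}_3$-summand (e.g., the $[y_j, x_i]\cdot x_k$ term) whose parameter triple singles it out among the list.

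The central step is spanning, which I would carry out by case analysis on the number of distinct indices appearing in $\mathbf{d}$; recall $\sum_i d_i = 3$. For one distinct index, the relevant component of $K\mathcal{B}_3$ is two-dimensional and yields only the $[y_i, x_i]\cdot x_i$ constant from the second family. For two distinct indices one enumerates the basis elements of $\mathcal{B}_3$ in that component, sorts them by $y$-degree, and computes $\delta$ on each to obtain a small linear system whose kernel can be identified with a subset of the listed constants. For three distinct indices the component is larger and the third and fourth families enter in a non-trivial way to exhaust the kernel in $y$-degree one.

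The main obstacle is the systematic enumeration and $\delta$-computation in the two- and three-index multi-degrees, where the number of basis elements grows and the signs coming from $[z_i, z_j] = -[z_j, z_i]$ and from the ordering $x_1 < \cdots < x_n < y_1 < \cdots < y_n$ used to canonicalize $\mathcal{B}_3$ must be tracked carefully. Counting the kernel dimension in each bidegree $(\mathbf{d}, m)$ and matching it against the number of listed constants of the appropriate form provides a useful sanity check against missing a constant.
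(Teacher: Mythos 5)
Your proposal is correct and is, at bottom, the same computation as the paper's: verify the listed elements are killed by $\delta$, then compute the kernel of $\delta$ on finite graded pieces by linear algebra. The difference is purely in which $\delta$-compatible grading you slice along first. The paper decomposes $K\mathcal{B}_3$ by the $x$/$y$ pattern of the monomial into six pieces $A_1,\ldots,A_6$ (e.g.\ $A_3=\{[x_i,x_j]\cdot y_k\}$, $A_6=\{[y_i,x_j]\cdot x_k\}$), observes that $\delta$ sends each $A_i$ into specific others, and so reduces everything to three symbolic kernel computations with general coefficients $\alpha_{ijk}$: it shows $(KA_2)^{\delta}=0$ and $(KA_4\oplus KA_5)^{\delta}=0$ and extracts the nontrivial constants from $(KA_3\oplus KA_6)^{\delta}$, handling all index configurations uniformly in one pass. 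Your multidegree-first organization yields smaller, fully explicit systems but multiplies the bookkeeping: you must treat separately the shapes $(3)$, $(2,1)$, $(1,1,1)$ of the multidegree, and within the two-index case also which index carries the $y$'s and the multiplicity, so the letter-pattern decomposition is worth adopting as the inner organizing principle even inside each multidegree component. Two small cautions: for linear independence, $[y_j,x_i]\cdot x_k$ is not a good distinguished summand since it occurs in both the third and fourth families; use $[x_i,x_j]\cdot y_k$ for the third and $[y_i,x_j]\cdot x_k$ for the fourth. And when you enumerate you will find that the cancellation producing the third and fourth families does not use any relation between $k$ and $i,j$, so your case analysis will surface such constants for every $k$ (as the paper's own computation in fact does), not only for $k>i$; be prepared to reconcile your list with the stated index range.
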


\begin{proof}
Let us set
\begin{align}
A_1=&\{[x_i, x_j]\cdot x_k\mid 1\leq j<i\leq n \ , \ 1\leq k\leq n \},\nonumber\\
A_2=&\{[y_i, y_j]\cdot y_k\mid 1\leq j<i\leq n \ , \ 1\leq k\leq n\},\nonumber\\
A_3=&\{[x_i, x_j]\cdot y_k\mid 1\leq j<i\leq n \ , \ 1\leq k\leq n\},\nonumber\\
A_4=&\{[y_i, y_j]\cdot x_k\mid 1\leq j<i\leq n \ , \ 1\leq k\leq n\},\nonumber\\
A_5=&\{[y_i, x_j]\cdot y_k\mid 1\leq i,j,k\leq n\},\nonumber\\
A_6=&\{[y_i, x_j]\cdot x_k\mid 1\leq i,j,k\leq n\}.\nonumber
\end{align}
Indeed $K{\mathcal B}_3=KA_1\oplus KA_2\oplus KA_3\oplus KA_4\oplus KA_5\oplus KA_6$,
while $(KA_1)^{\delta}=KA_1$,
\[
(KA_2)^{\delta}\subset KA_4\oplus KA_5 \ , \ (KA_3)^{\delta}\subset KA_1,
\]
\[
(KA_4)^{\delta}\subset KA_6 \ , \ (KA_5)^{\delta}\subset KA_3\oplus KA_6 \ , \ (KA_6)^{\delta}\subset KA_1.
\]
Thus
\[
(K{\mathcal B}_3)^{\delta}=KA_1\oplus(KA_2)^{\delta}\oplus(KA_3\oplus KA_6)^{\delta}\oplus(KA_4\oplus KA_5)^{\delta}.
\]
Now let
\[
p_2=\sum_{j<i}\alpha_{ijk}[y_i, y_j]\cdot y_k
\]
be an element in $(KA_2)^{\delta}$, then $\delta(p_2)=0$ and
\begin{align}
0=&\sum_{j<i}\alpha_{ijk}([x_i, y_j]\cdot y_k+[y_i,x_j]\cdot y_k+[y_i, y_j]\cdot x_k)\nonumber\\
=&\sum_{j<i}\alpha_{ijk}(-[y_j,x_i]\cdot y_k+[y_i,x_j]\cdot y_k)+\sum_{j<i}\alpha_{ijk}[y_i, y_j]\cdot x_k.\nonumber
\end{align}
Notice that $\sum_{j<i}\alpha_{ijk}[y_i, y_j]\cdot x_k$ belongs to $KA_4$, then
\[
0=\sum_{j<i}\alpha_{ijk}[y_i, y_j]\cdot x_k.
\]
We immediately get any $\alpha_{ijk}$ is 0 and $p_2$ turns out to be 0, too. This proves that there is no nonzero constant in $KA_2$.

Suppose now $p_3+p_6\in (KA_3\oplus KA_6)^{\delta}$ for some $p_3\in KA_3$ and $p_6\in KA_6$.
First observe that elements of the form $[y_i, x_i]\cdot x_k$ are constants in $KA_6$. Then we may assume
 \[
p_3=\sum_{j<i}\alpha_{ijk}[x_i, x_j]\cdot y_k \ \in KA_3 \ \ \ \text{\rm and} \ \ \ p_6=\sum_{i\neq j}\beta_{ijk}[y_i, x_j]\cdot x_k \ \in KA_6.
\]
Hence
\begin{align}
\delta(p_3+p_6)=0=&\sum_{j<i}\alpha_{ijk}[x_i, x_j]\cdot x_k+\sum_{j<i}\beta_{ijk}[x_i, x_j]\cdot x_k+\sum_{i<j}\beta_{ijk}[x_i, x_j]\cdot x_k\nonumber\\
=&\sum_{j<i}(\alpha_{ijk}+\beta_{ijk}-\beta_{jik})[x_i, x_j]\cdot x_k\nonumber
\end{align}
and $\beta_{jik}=\alpha_{ijk}+\beta_{ijk}$ for $j<i$. This gives us $(KA_3\oplus KA_6)^{\delta}$
is spanned on the elements of the form
\[
[x_i, x_j]\cdot y_k+[y_j, x_i]\cdot x_k \ \ \ \text{\rm and} \ \ \ [y_i, x_j]\cdot x_k+[y_j, x_i]\cdot x_k
\]
for $j<i$.

We simply need to show $(KA_4\oplus KA_5)^{\delta}=\{0\}$. Let $p_4+p_5\in (KA_4\oplus KA_5)^{\delta}$ such that
 \[
p_4=\sum_{j<i}\alpha_{ijk}[y_i, y_j]\cdot x_k
\]
and
\begin{align}
p_5=&\sum_{}\beta_{ijk}[y_i, x_j]\cdot y_k=\sum_{j<i}(\beta_{ijk}[y_i, x_j]\cdot y_k+\beta_{jik}[y_j, x_i]\cdot y_k)+\sum_{}\beta_{iik}[y_i, x_i]\cdot y_k.\nonumber
\end{align}
We have
\[
\delta(p_4)=\sum_{j<i}\alpha_{ijk}([x_i, y_j]\cdot x_k+[y_i,x_j]\cdot x_k),
\]
\begin{align}
\delta(p_5)=&\sum_{j<i}\left(\beta_{ijk}([x_i, x_j]\cdot y_k+[y_i, x_j]\cdot x_k)-\beta_{jik}([x_j, x_i]\cdot y_k+[y_j, x_i]\cdot x_k)\right)\nonumber\\
&+\sum_{}\beta_{iik}[y_i, x_i]\cdot x_k.\nonumber
\end{align}
Then $\delta(p_4+p_5)=0$ gives that
\[
0=\sum_{j<i}(\alpha_{ijk}+\beta_{ijk})[y_i,x_j]\cdot x_k+(-\alpha_{ijk}+\beta_{jik})[y_j,x_i]\cdot x_k \ \in KA_6
\]
\[
0=\sum_{}\beta_{iik}[y_i, x_i]\cdot x_k \ \in KA_6 \ , \  0=\sum_{j<i}(\beta_{ijk}-\beta_{jik})[x_i, x_j]\cdot y_k \ \in KA_3
\]
that is
\[
\alpha_{ijk}+\beta_{ijk}=0 \ , \ -\alpha_{ijk}+\beta_{jik}=0 \ , \ \beta_{iik}=0 \ , \ \beta_{ijk}-\beta_{jik}=0;
\]
hence
\[
\alpha_{ijk}=\beta_{ijk}=\beta_{jik}=\beta_{iik}=0
\]
for $j<i$. Therefore, $p_4=p_5=0$. Now the proof follows by direct computations showing that the constants found are linearly independent.
\end{proof}

\begin{theorem}\label{KB_4}
The $K$-vector space $(K{\mathcal B}_4)^{\delta}$ is of a basis consisting of elements of the form
\[
[x_i, x_j, x_k]\cdot  x_l \ , \  j<i\leq l\ ,\ j\leq k<l,
\]
\[
[x_i, x_j, x_j]\cdot  y_j \ , \ [x_i, x_j, x_i]\cdot  y_i \ , \ [x_i, x_j, x_i]\cdot  y_j+[x_i, x_j, x_j]\cdot  y_i \ , \ j<i,
\]
\[
[x_i, x_j, x_k]\cdot  y_k \ , \ [x_k, x_j, x_i]\cdot  y_i \ , \ [x_i, x_j, x_i]\cdot  y_k+[x_i, x_j, x_k]\cdot  y_i\ , \ j<i<k,
\]
\[
[x_i, x_j, x_i]\cdot  y_k+[x_k, x_i, x_i]\cdot  y_j\ , \ [x_i, x_j, x_k]\cdot  y_j-[x_k, x_j, x_i]\cdot  y_j\ , \  j<i<k,
\]
\[
[x_i, x_j, x_k]\cdot  y_j+[x_i, x_j, x_j]\cdot  y_k\ , \  [x_i, x_j, x_k]\cdot  y_j+[x_k, x_j, x_j]\cdot  y_i\ , \ j<i<k,
\]
\[
[x_k, x_j, x_k]\cdot  y_i+[x_k, x_j, x_i]\cdot  y_k \ , \ [x_k, x_j, x_k]\cdot  y_i-[x_k, x_i, x_k]\cdot  y_j\ , \  j<i<k,
\]
\[
[x_k, x_j,x_l]\cdot y_i+[x_i, x_j,x_k]\cdot y_l-[x_k, x_i,x_l]\cdot y_j  \ , \  j<i<k<l,
\]
\[
[x_k, x_j,x_i]\cdot y_l-[x_i, x_j,x_k]\cdot y_l+[x_k, x_i,x_l]\cdot y_j  \ , \  j<i<k<l,
\]
\[
[x_l, x_i,x_k]\cdot y_j+[x_i, x_j,x_k]\cdot y_l-[x_k, x_i,x_l]\cdot y_j  \ , \  j<i<k<l,
\]
\[
[x_i, x_j,x_l]\cdot y_k+[x_i, x_j,x_k]\cdot y_l \ , \ [x_l, x_j,x_i]\cdot y_k+[x_k, x_i,x_l]\cdot y_j \ , \ j<i<k<l,
\]
\[
[x_l, x_j,x_k]\cdot y_i-[x_k, x_i,x_l]\cdot y_j \ , \ j<i<k<l,
\]
\[
[y_i, x_i, x_i]\cdot  y_i \ , \ [y_j, x_j, x_i]\cdot  y_i \ , \ [y_j, x_i, x_i]\cdot  y_i+[y_i, x_j, x_i]\cdot  y_i \ , \ j<i,
\]
\[
[y_j, x_j, x_i]\cdot  y_j+[y_j, x_j, x_j]\cdot  y_i \ , \ [y_j, x_i, x_i]\cdot  y_j-2[x_i, x_j, y_j]\cdot  y_i+[y_i, x_j, x_j]\cdot  y_i \ , \ j<i,
\]
\[
[y_i, x_j, x_k]\cdot  y_i+[x_i, x_j, y_i]\cdot  y_k+[x_k, x_i, y_j]\cdot  y_i +[y_j, x_i, x_i]\cdot  y_k \ , \ j<i<k,
\]
\[
[y_j, x_i, x_k]\cdot  y_i-[x_i, x_j, y_i]\cdot  y_k-[x_k, x_i, y_j]\cdot  y_i +[y_i, x_j, x_i]\cdot  y_k \ , \ j<i<k,
\]
\[
[x_k, x_j, y_i]\cdot  y_k+[x_k, x_i, y_j]\cdot  y_k -[y_j, x_k, x_k]\cdot  y_i -[y_k, x_j, x_i]\cdot  y_k \ , \ j<i<k,
\]
\[
[y_i, x_j, x_k]\cdot  y_k+[y_j, x_i, x_k]\cdot  y_k \ , \ [y_j, x_j, x_k]\cdot  y_i+[y_j, x_j, x_i]\cdot  y_k \ , \ j<i<k,
\]
\[
[x_i, x_j, y_j]\cdot  y_k+[x_k, x_j, y_j]\cdot  y_i -[y_i, x_j, x_j]\cdot  y_k-[y_j, x_i, x_k]\cdot  y_j \ , \ j<i<k,
\]
\[
[x_l, x_k, y_j]\cdot  y_i +[x_i, x_j, y_k]\cdot  y_l+[y_j, x_i, x_k]\cdot  y_l+[y_i, x_j, x_l]\cdot  y_k \ , \ j<i<k<l,
\]
\[
[y_j, x_k, x_l]\cdot  y_i+[y_k, x_j, x_i]\cdot  y_l-[x_l, x_i, y_j]\cdot  y_k-[x_k, x_j, y_i]\cdot  y_l  \ , \ j<i<k<l,
\]
\[
[y_j, x_i, x_l]\cdot  y_k+[y_i, x_j, x_k]\cdot  y_l +[y_j, x_i, x_k]\cdot  y_l+[y_i, x_j, x_l]\cdot  y_k \ , \ j<i<k<l,
\]
\[
[x_l, x_j, y_i]\cdot  y_k+[x_k, x_i, y_j]\cdot  y_l -[y_j, x_i, x_k]\cdot  y_l-[y_i, x_j, x_l]\cdot  y_k-[x_l, x_i, y_j]\cdot  y_k-[x_k, x_j, y_i]\cdot  y_l.
\]
\end{theorem}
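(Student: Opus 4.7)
The plan is to follow the same strategy used for Theorem \ref{KB_3}: decompose $K\mathcal{B}_4$ into subspaces indexed by the pattern of $x$'s and $y$'s filling the four slots of a basis element $[z_i,z_j,z_k]\cdot z_l$, analyze the action of $\delta$ across these subspaces, and solve the resulting linear systems.

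First, for each $\tau=(\tau_1,\tau_2,\tau_3,\tau_4)\in\{x,y\}^4$ let $A_\tau$ denote the subset of $\mathcal{B}_4$ whose basis elements have pattern $\tau$ (subject to the standard index inequalities $j<i\le l$ and $j\le k<l$). This yields a direct sum decomposition $K\mathcal{B}_4=\bigoplus_\tau KA_\tau$. Because $\delta$ is a derivation acting as $y_m\mapsto x_m\mapsto 0$, it sends $KA_\tau$ into $\bigoplus_{\tau'}KA_{\tau'}$, where $\tau'$ runs over the patterns obtained from $\tau$ by replacing exactly one $y$ by $x$. In particular, $\delta$ respects the grading by the total number of $y$'s, lowering it by one; the pure $x$-block $(x,x,x,x)$ then lies automatically in the kernel and accounts for the first family in the theorem.

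Second, I would treat each $y$-degree block separately. For each $y$-degree $k\in\{1,2,3,4\}$, I write a generic element of the direct sum of the relevant $KA_\tau$'s with indeterminate coefficients, apply $\delta$, and use the identities of $P_{2n}$ (in particular $[a,b]=-[b,a]$, the Jacobi consequence $[a,b,c]=[a,c,b]-[b,c,a]$, the relation $[a,b,c]\cdot d=-[a,b,d]\cdot c$, the vanishing $[a,b,c]\cdot c=0$, and the symmetry $[a,b,c,d]=[a,b,d,c]$) to rewrite the image in the canonical form dictated by $\mathcal{B}_4$. Setting the result to zero produces a system of linear relations on the unknown coefficients whose solution space should be spanned precisely by the families listed in the theorem. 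A posteriori one verifies that the listed families are indeed annihilated by $\delta$ via the same identities, and that they are linearly independent by identifying, for each, a distinguishing leading monomial in $\mathcal{B}_4$.

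The main obstacle lies in the $y$-degree-two block, which is by far the largest and mixes several patterns such as $(y,x,x,y)$, $(x,y,x,y)$, $(x,x,y,y)$, $(y,y,x,x)$, $(y,x,y,x)$ and $(x,y,y,x)$. The identities of $P_{2n}$ create nontrivial overlaps between these patterns after standardization, which explains both the length of the list in the theorem and the appearance of coefficients such as $-2$ and the four-term mixed combinations. The bookkeeping is best organized by the coincidence pattern of the indices $(i,j,k,l)$ — all distinct versus $i=l$, $i=k$, $j=k$, etc. — so that the linear system splits into manageable blocks, each giving rise to one of the families stated in the theorem.
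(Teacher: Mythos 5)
Your proposal follows essentially the same route as the paper: a decomposition of $K\mathcal{B}_4$ by the $x/y$-pattern of the four slots (the index inequalities of $\mathcal{B}_4$ leave only six nonempty patterns, and the $y$-degree grading forces the kernel to split accordingly), followed by a further split according to the coincidence pattern of the indices and the solution of the resulting linear systems on coefficients. This is exactly the paper's Cases 1--4, so the plan is sound and matches the published argument.
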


\begin{proof}
It is easy to see
\[
K{\mathcal B}_4=KB_1\oplus KB_2\oplus KB_3\oplus KB_4\oplus KB_5\oplus KB_6,
\]
where
\begin{align}
B_1=&\{[x_i, x_j,x_k]\cdot x_l\mid 1\leq j<i\leq l\leq n \ , \ 1\leq j\leq k<l \leq n \},\nonumber\\
B_2=&\{[y_i, y_j,y_k]\cdot y_l\mid 1\leq j<i\leq l\leq n \ , \ 1\leq j\leq k<l \leq n \},\nonumber\\
B_3=&\{[x_i, x_j,x_k]\cdot y_l\mid 1\leq j<i \leq n \ , \ 1\leq j\leq k \leq n\},\nonumber\\
B_4=&\{[x_i, x_j,y_k]\cdot y_l\mid 1\leq j<i\leq n \ , \ 1\leq k<l \leq n\},\nonumber\\
B_5=&\{[y_i, x_j,x_k]\cdot y_l\mid 1\leq i\leq l\leq n \ , \ 1\leq j\leq k\leq n\},\nonumber\\
B_6=&\{[y_i, x_j,y_k]\cdot y_l\mid 1\leq i\leq l\leq n \ , \ 1\leq k<l \leq n\}.\nonumber
\end{align}
We have $(KB_1)^{\delta}=KB_1$,
\[
(KB_2)^{\delta}\subset KB_6 \ , \ (KB_3)^{\delta}\subset KB_1,
\]
\[
(KB_4)^{\delta}\subset KB_3 \ , \ (KB_5)^{\delta}\subset KB_3 \ , \ (KB_6)^{\delta}\subset KB_4\oplus KB_5
\]
and we get
\[
(K{\mathcal B}_4)^{\delta}=KB_1\oplus(KB_2)^{\delta}\oplus(KB_3)^{\delta}\oplus(KB_4\oplus KB_5)^{\delta}\oplus(KB_6)^{\delta}.
\]
We divide the proof in four cases: we study separately $(KB_2)^{\delta}$, $(KB_3)^{\delta}$, $(KB_4\oplus KB_5)^{\delta}$, and finally $(KB_6)^{\delta}$.

\

\noindent\underline{Case 1} Let $p_2\in (KB_2)^{\delta}$ be of the form
\begin{align}
p_2=&\sum_{j<i\leq l, j\leq k<l}\alpha_{ijkl}[y_i, y_j,y_k]\cdot y_l.\nonumber
\end{align}
Note that the condition $j<i\leq l \ , \ j\leq k<l$ can be divided into six parts:
\begin{align}
&j=k<i=l \ , \ j<k<i<l \ , \ j<i<k<l,\nonumber\\
&j<i=k<l \ , \ j<k<i=l \ , \ j=k<i<l.\nonumber
\end{align}
Hence
\begin{align}
p_2=&\sum_{j<i}\alpha_{ijji}[y_i, y_j,y_j]\cdot y_i+\sum_{j<k<i<l}\alpha_{ijkl}[y_i, y_j,y_k]\cdot y_l+\sum_{j<i<k<l}\alpha_{ijkl}[y_i, y_j,y_k]\cdot y_l\nonumber\\
&+\sum_{j<i<l}\alpha_{ijil}[y_i, y_j,y_i]\cdot y_l+\sum_{ j<k<i}\alpha_{ijki}[y_i, y_j,y_k]\cdot y_i+\sum_{j<i<l}\alpha_{ijjl}[y_i, y_j,y_j]\cdot y_l\nonumber\\
=&\sum_{j<i}\alpha_{ijji}[y_i, y_j,y_j]\cdot y_i+\sum_{j<i<k<l}(\alpha_{kjil}[y_k, y_j,y_i]\cdot y_l+\alpha_{ijkl}[y_i, y_j,y_k]\cdot y_l)\nonumber\\
&+\sum_{j<i<k}(\alpha_{ijik}[y_i, y_j,y_i]\cdot y_k+\alpha_{kjik}[y_k, y_j,y_i]\cdot y_k+\alpha_{ijjk}[y_i, y_j,y_j]\cdot y_k).\nonumber
\end{align}
Let us rewrite $p_2$ as
\[
p_2=p_{22}+p_{23}+p_{24},
\]
where
\[
p_{22}=\sum_{j<i}\alpha_{ijji}[y_i, y_j,y_j]\cdot y_i \ , \ p_{24}=\sum_{j<i<k<l}(\alpha_{kjil}[y_k, y_j,y_i]\cdot y_l+\alpha_{ijkl}[y_i, y_j,y_k]\cdot y_l),
\]
\[
p_{23}=\sum_{j<i<k}(\alpha_{ijik}[y_i, y_j,y_i]\cdot y_k+\alpha_{kjik}[y_k, y_j,y_i]\cdot y_k+\alpha_{ijjk}[y_i, y_j,y_j]\cdot y_k).
\]
Let $V_{2m}$, $m=2,3,4$, be the subspace of $KB_6$ spanned on monomials $[y_i, x_j,y_k]\cdot y_l$ with $|\{i,j,k,l\}|=m$.
Then $V_{22}\cap V_{23}=\{0\}$, $V_{22}\cap V_{24}=\{0\}$, $V_{23}\cap V_{24}=\{0\}$,
and $\delta(p_{22})\in V_{22}$, $\delta(p_{23})\in V_{23}$, $\delta(p_{24})\in V_{24}$.
Therefore $\delta(p_2)=0$ implies that $\delta(p_{22})=\delta(p_{23})=\delta(p_{24})=0$.

Remark that $\delta([y_i, y_j,y_j]\cdot y_i)$ can be written as a linear combination of basis elements as follows:
\begin{align}
\delta([y_i, y_j,y_j]\cdot y_i)=&[x_i, y_j,y_j]\cdot y_i+[y_i, x_j,y_j]\cdot y_i+[y_i, y_j,x_j]\cdot y_i+[y_i, y_j,y_j]\cdot x_i\nonumber\\
=&-[y_j,x_i, y_j]\cdot y_i+[y_i, x_j,y_j]\cdot y_i-[y_j,x_j,y_i]\cdot y_i+[y_i,x_j, y_j]\cdot y_i\nonumber\\
&+[x_i,y_j,y_j]\cdot y_i-[y_j, y_j,x_i]\cdot y_i\nonumber\\
=&-2[y_j,x_i, y_j]\cdot y_i+2[y_i, x_j,y_j]\cdot y_i.\nonumber
\end{align}
Thus
\[
\delta(p_{22})=0=2\sum_{j<i}\alpha_{ijji}(-[y_j,x_i, y_j]\cdot y_i+[y_i, x_j,y_j]\cdot y_i)
\]
and consequently
\[
\sum_{j<i}\alpha_{ijji}[y_j,x_i, y_j]\cdot y_i=0=\sum_{j<i}\alpha_{ijji}[y_i, x_j,y_j]\cdot y_i
\]
since the basis elements belong to different multilinear components for $j<i$.
We get $\alpha_{ijji}=0$ if $j<i$ that means $p_{22}=0$.

Similarly, because $\delta(p_{23})=0$ we have the sum
\[
\sum_{j<i<k}\left(\begin{array}{ll}\alpha_{ijik}(-2[y_j, x_i,y_i]\cdot y_k+[y_i, x_j,y_i]\cdot y_k+[y_i, x_i,y_j]\cdot y_k+[y_i, x_k,y_j]\cdot y_i)\\
+\alpha_{kjik}([y_k, x_j,y_i]\cdot y_k+[y_k, x_i,y_j]\cdot y_k-[y_j, x_k,y_i]\cdot y_k-[y_i, x_l,y_j]\cdot y_l)\\
+\alpha_{ijjk}(2[y_i, x_j,y_j]\cdot y_k-[y_j, x_i,y_j]\cdot y_k-[y_j, x_j,y_i]\cdot y_k-[y_j, x_k,y_j]\cdot y_i)\end{array}\right)\nonumber
\]
is zero, from which we deduce that each summand is zero; moreover,
\[
0=\sum_{j<i<k}\alpha_{ijik}[y_j, x_i,y_i]\cdot y_k=\sum_{j<i<k}\alpha_{kjik}[y_k, x_j,y_i]\cdot y_k=\sum_{j<i<k}\alpha_{ijjk}[y_i, x_j,y_j]\cdot y_k
\]
because the basis elements belong to different multilinear components. As above, we get $0=\alpha_{ijik}=\alpha_{kjik}=\alpha_{ijjk}$
for $j<i<k$ implying that $p_{23}=0$.

Again, writing $\delta(p_{24})$ as a linear combination of basis elements, we get
\begin{align}
0=&\sum_{j<i<k<l}\alpha_{kjil}\left(\begin{array}{ll}-[y_j, x_k,y_i]\cdot y_l+[y_k, x_j,y_i]\cdot y_l
-[y_j, x_i,y_k]\cdot y_l\\+[y_k, x_i,y_j]\cdot y_l-[y_i, x_l,y_j]\cdot y_k \end{array}\right)\nonumber\\
&+\sum_{j<i<k<l}\alpha_{ijkl}\left(\begin{array}{ll}-[y_j, x_i,y_k]\cdot y_l+[y_i, x_j,y_k]\cdot y_l-[y_j, x_k,y_i]\cdot y_l\\
+[y_i, x_k,y_j]\cdot y_l-[y_i, x_l,y_j]\cdot y_k+[y_j, x_l,y_i]\cdot y_k\end{array}\right)\nonumber
\end{align}
and we get $\alpha_{kjil}=\alpha_{ijkl}=0$ for $j<i<k<l$. Thus $p_{24}=0$, from which we get $p_2=0$.

\

\noindent\underline{Case 2} We study $(KB_3)^{\delta}$. Let
\[
p_3=\sum_{ j<i, \ j\leq k}\alpha_{ijkl}[x_i, x_j,x_k]\cdot y_l=p_{32}+p_{33}+p_{34}
\]
be a constant, where
\[
p_{32}=\sum_{j<i}\alpha_{ijii}[x_i, x_j,x_i]\cdot y_i+\alpha_{ijij}[x_i, x_j,x_i]\cdot y_j+\alpha_{ijji}[x_i, x_j,x_j]\cdot y_i+\alpha_{ijjj}[x_i, x_j,x_j]\cdot y_j,
\]
\[
p_{33}=\sum_{j<i<k}\left(\begin{array}{ll}\alpha_{ijik}[x_i, x_j,x_i]\cdot y_k+\alpha_{ijik}[x_i, x_j,x_i]\cdot y_k+\alpha_{ijik}[x_i, x_j,x_i]\cdot y_k\\
+\alpha_{ijkk}[x_i, x_j,x_k]\cdot y_k+\alpha_{ijki}[x_i, x_j,x_k]\cdot y_i+\alpha_{ijkj}[x_i, x_j,x_k]\cdot y_j\\
+\alpha_{kjik}[x_k, x_j,x_i]\cdot y_k+\alpha_{kjii}[x_k, x_j,x_i]\cdot y_i+\alpha_{kjij}[x_k, x_j,x_i]\cdot y_j\\
+\alpha_{ijjk}[x_i, x_j,x_j]\cdot y_k+\alpha_{kjji}[x_k, x_j,x_k]\cdot y_i+\alpha_{kiij}[x_k, x_i,x_i]\cdot y_j\end{array}\right),
\]
\[
p_{34}=\sum_{j<i<k<l}\left(\begin{array}{ll}\alpha_{ijkl}[x_i, x_j,x_k]\cdot y_l+\alpha_{ijlk}[x_i, x_j,x_l]\cdot y_k+\alpha_{kjli}[x_k, x_j,x_l]\cdot y_i\\
+\alpha_{kilj}[x_k, x_i,x_l]\cdot y_j+\alpha_{kjil}[x_k, x_j,x_i]\cdot y_l+\alpha_{ljik}[x_l, x_j,x_i]\cdot y_k\\
+\alpha_{ljki}[x_l, x_j,x_k]\cdot y_i+\alpha_{likj}[x_l, x_i,x_k]\cdot y_j\end{array}\right).
\]
As well as in the previous case, $\delta(p_{32})=\delta(p_{33})=\delta(p_{34})=0$. Let us study the case $\delta(p_{32})=0$. An easy computation shows the elements
\[
[x_i, x_j,x_i]\cdot y_i \ , \ [x_i, x_j,x_j]\cdot y_j \ , \ j<i
\]
are constants, then we get
\[
0=\sum_{j<i}(-\alpha_{ijij}+\alpha_{ijji})[x_i, x_j,x_j]\cdot x_i
\]
 and consequently $\alpha_{ijij}=\alpha_{ijji}$. Moreover, we get the elements of the form
\[
[x_i, x_j,x_i]\cdot y_j+[x_i, x_j,x_j]\cdot y_i \ , \ j<i
\]
are constants, too.

Notice also
\[
[x_i, x_j,x_k]\cdot y_k \ , \ [x_k, x_j,x_i]\cdot y_i \ , \ j<i<k.
\]
are constants, then $\delta(p_{33})=0$ may be rewritten as
\[
0=\sum_{j<i<k}\left(\begin{array}{ll}(\alpha_{ijik}-\alpha_{ijki}-\alpha_{kiij})[x_i, x_j,x_i]\cdot x_k\\
(-\alpha_{kjki}-\alpha_{kikj}+\alpha_{kjik})[x_k, x_j,x_i]\cdot x_k\\
(-\alpha_{ijkj}-\alpha_{kjij}+\alpha_{ijjk}+\alpha_{kjji})[x_i, x_j,x_j]\cdot x_k\end{array}\right).
\]
Hence we get the following system of equations for $j<i<k$:
\[\left\{
\begin{array}{ll}\alpha_{ijik}=\alpha_{ijki}+\alpha_{kiij}\\
\alpha_{kjki}=\alpha_{kjik}-\alpha_{kikj}\\
\alpha_{ijkj}=-\alpha_{kjij}+\alpha_{ijjk}+\alpha_{kjji}\end{array}\right..
\]
By the previous system, we get the next polynomials are constants, too.
\[
[x_i, x_j, x_i]\cdot  y_k+[x_i, x_j, x_k]\cdot  y_i\ , \ [x_i, x_j, x_i]\cdot  y_k+[x_k, x_i, x_i]\cdot  y_j
\]
\[
[x_k, x_j, x_k]\cdot  y_i+[x_k, x_j, x_i]\cdot  y_k \ , \ [x_k, x_j, x_k]\cdot  y_i-[x_k, x_i, x_k]\cdot  y_j
\]
\[
[x_i, x_j, x_k]\cdot  y_j-[x_k, x_j, x_i]\cdot  y_j\ , \ [x_i, x_j, x_k]\cdot  y_j+[x_i, x_j, x_j]\cdot  y_k
\]
\[
[x_i, x_j, x_k]\cdot  y_j+[x_k, x_j, x_j]\cdot  y_i \ , \ j<i<k.
\]
Let us rewrite now the relation $0=\delta(p_{34})$ as a linear combination of the basis elements.
\begin{align}
0=&\sum_{j<i<k<l}\left(\begin{array}{ll}\alpha_{ijkl}[x_i, x_j,x_k]\cdot x_l+\alpha_{ijlk}[x_i, x_j,x_l]\cdot x_k+\alpha_{kjli}[x_k, x_j,x_l]\cdot x_i\\
+\alpha_{kilj}[x_k, x_i,x_l]\cdot x_j+\alpha_{kjil}[x_k, x_j,x_i]\cdot x_l+\alpha_{ljik}[x_l, x_j,x_i]\cdot x_k\\
+\alpha_{ljki}[x_l, x_j,x_k]\cdot x_i+\alpha_{likj}[x_l, x_i,x_k]\cdot x_j\end{array}\right)\nonumber\\
=&\sum_{j<i<k<l}\left(\begin{array}{ll}(\alpha_{ijkl}-\alpha_{ijlk}+\alpha_{kilj}-\alpha_{ljik}+\alpha_{ljki})[x_i, x_j, x_k]\cdot  x_l\\
+(-\alpha_{kjli}-\alpha_{kilj}+\alpha_{kjil}+\alpha_{ljik}-\alpha_{ljki}-\alpha_{likj})[x_k, x_j, x_i]\cdot  x_l\end{array}\right).\nonumber
\end{align}
Hence we get
\[
\alpha_{ijkl}-\alpha_{ijlk}+\alpha_{kilj}-\alpha_{ljik}+\alpha_{ljki}=0
\]
\[
-\alpha_{kjli}-\alpha_{kilj}+\alpha_{kjil}+\alpha_{ljik}-\alpha_{ljki}-\alpha_{likj}=0
\]
for $j<i<k<l$, from which we have the following relations:
\[
\alpha_{ijkl}=\alpha_{ijlk}+\alpha_{kjli}-\alpha_{kjil}+\alpha_{likj}
\]
\[
\alpha_{kilj}=\alpha_{ljik}-\alpha_{ljki}-\alpha_{kjli}+\alpha_{kjil}-\alpha_{likj}.
\]
Substituting the relations above in the expression of $p_{34}$, 
we get the following constants:
\begin{align}
&[x_i, x_j,x_l]\cdot y_k+[x_i, x_j,x_k]\cdot y_l,\nonumber\\
&[x_l, x_j,x_i]\cdot y_k+[x_k, x_i,x_l]\cdot y_j,\nonumber\\
&[x_l, x_j,x_k]\cdot y_i-[x_k, x_i,x_l]\cdot y_j,\nonumber\\
&[x_k, x_j,x_l]\cdot y_i+[x_i, x_j,x_k]\cdot y_l-[x_k, x_i,x_l]\cdot y_j,\nonumber\\
&[x_k, x_j,x_i]\cdot y_l-[x_i, x_j,x_k]\cdot y_l+[x_k, x_i,x_l]\cdot y_j,\nonumber\\
&[x_l, x_i,x_k]\cdot y_j+[x_i, x_j,x_k]\cdot y_l-[x_k, x_i,x_l]\cdot y_j.\nonumber
\end{align}

\

\noindent\underline{Case 3} We study $(KB_4\oplus KB_5)^{\delta}$.
Let
\[
p_4=\sum_{j<i , \ k<l}\alpha_{ljki}[x_i, x_j,y_k]\cdot y_l \ \in KB_4 \ , \ p_5=\sum_{i\leq l , \ j\leq k}\beta_{ljki}[y_i, x_j,x_k]\cdot y_l\ \in KB_5
\]
such that $\delta(p_4+p_5)=0$. 
We point out $[y_i, x_i,x_i]\cdot y_i \ , [y_j, x_j,x_i]\cdot y_i \in(KB_4\oplus KB_5)^{\delta}$, $1\leq j<i\leq n$, since
\begin{align}
&\delta([y_i, x_i,x_i]\cdot y_i)=[x_i, x_i,x_i]\cdot y_i+[y_i, x_i,x_i]\cdot x_i=0,\nonumber\\
&\delta([y_j, x_j,x_i]\cdot y_i)=[x_j, x_j,x_i]\cdot y_i+[y_j, x_j,x_i]\cdot x_i=0.\nonumber
\end{align}
Thus we may assume $p_4+p_5=p_{452}+p_{453}+p_{454}$,
then
\[
\delta(p_4+p_5)=\delta(p_{452})+\delta(p_{453})+\delta(p_{454})=0,
\]
where
\[
p_{452}=\sum_{j<i}\left(\begin{array}{ll}\alpha_{ijji}[x_i, x_j,y_j]\cdot y_i+\beta_{ijii}[y_i, x_j,x_i]\cdot y_i+\beta_{ijji}[y_i, x_j,x_j]\cdot y_i\\
+\beta_{jjji}[y_j, x_j,x_j]\cdot y_i+\beta_{jjij}[y_j, x_j,x_i]\cdot y_j+\beta_{jiii}[y_j, x_i,x_i]\cdot y_i\\
+\beta_{jiij}[y_j, x_i,x_i]\cdot y_j\end{array}\right),
\]
\[
p_{453}=\sum_{j<i<k}\left(\begin{array}{ll}\alpha_{ijik}[x_i, x_j,y_i]\cdot y_k+\alpha_{kiji}[x_k, x_i,y_j]\cdot y_i+\alpha_{ijjk}[x_i, x_j,y_j]\cdot y_k\\
+\alpha_{kjji}[x_k, x_j,y_j]\cdot y_i+\alpha_{kijk}[x_k, x_i,y_j]\cdot y_k+\alpha_{kjik}[x_k, x_j,y_i]\cdot y_k\\
+\beta_{ijik}[y_i, x_j,x_i]\cdot y_k+\beta_{ijki}[y_i, x_j,x_k]\cdot y_i+\beta_{jiki}[y_j, x_i,x_k]\cdot y_i\\
+\beta_{jiik}[y_j, x_i,x_i]\cdot y_k+\beta_{ijjk}[y_i, x_j,x_j]\cdot y_k+\beta_{jjik}[y_j, x_j,x_i]\cdot y_k\\
+\beta_{jjki}[y_j, x_j,x_k]\cdot y_i+\beta_{jikj}[y_j, x_i,x_k]\cdot y_j+\beta_{ijkk}[y_i, x_j,x_k]\cdot y_k\\
+\beta_{kjik}[y_k, x_j,x_i]\cdot y_k+\beta_{jikk}[y_j, x_i,x_k]\cdot y_k+\beta_{jkki}[y_j, x_k,x_k]\cdot y_i\end{array}\right),
\]
\[
p_{454}=\sum_{j<i<k<l}\left(\begin{array}{ll}\alpha_{ijkl}[x_i, x_j,y_k]\cdot y_l+\alpha_{kjil}[x_k, x_j,y_i]\cdot y_l+\alpha_{ljik}[x_l, x_j,y_i]\cdot y_k\\
+\alpha_{kijl}[x_k, x_i,y_j]\cdot y_l+\alpha_{lijk}[x_l, x_i,y_j]\cdot y_k+\alpha_{lkji}[x_l, x_k,y_j]\cdot y_i\\
+\beta_{ijkl}[y_i, x_j,x_k]\cdot y_l+\beta_{kjil}[y_k, x_j,x_i]\cdot y_l+\beta_{ijlk}[y_i, x_j,x_l]\cdot y_k\\
+\beta_{jikl}[y_j, x_i,x_k]\cdot y_l+\beta_{jilk}[y_j, x_i,x_l]\cdot y_k+\beta_{jkli}[y_j, x_k,x_l]\cdot y_i\\
\end{array}\right).
\]
Rewriting $\delta(p_{452})\in KB_3$ as a linear combination of basis elements, we have
\[
0=\sum_{j<i}\left(\begin{array}{ll}(\alpha_{ijji}+2\beta_{ijji})[x_i, x_j,x_j]\cdot y_i+(-\alpha_{ijji}-2\beta_{jiij})[x_i, x_j,x_i]\cdot y_j\\
(\beta_{ijii}-\beta_{jiii})[x_i, x_j,x_i]\cdot y_i+(\beta_{jjji}-\beta_{jjij})[x_i, x_j,x_j]\cdot y_j\end{array}\right),
\]
then $\alpha_{ijji}=-2\beta_{jiij}$, $\beta_{ijji}=\beta_{jiij}$, $\beta_{ijii}=\beta_{jiii}$, $\beta_{jjji}=\beta_{jjij}$.
Thus, for $j<i$, the next are constants:
\begin{align}
&[y_j, x_i,x_i]\cdot y_j-2[x_i, x_j,y_j]\cdot y_i+[y_i, x_j,x_j]\cdot y_i,\nonumber\\
&[y_j, x_i,x_i]\cdot y_i+[y_i, x_j,x_i]\cdot y_i \ , \ [y_j, x_j,x_i]\cdot y_j+[y_j, x_j,x_j]\cdot y_i.\nonumber
\end{align}
A careful check of the coefficients of $[x_i, x_j,x_i]\cdot y_k$, $[x_i, x_j,x_k]\cdot y_i$, $[x_k, x_j,x_i]\cdot y_i$, $[x_k, x_i,x_i]\cdot y_j$, $[x_k, x_j,x_i]\cdot y_k$,
$[x_k, x_j,x_k]\cdot y_i$, $[x_i, x_j,x_k]\cdot y_k$, $[x_k, x_i,x_k]\cdot y_j$, $[x_i, x_j,x_j]\cdot y_k$, $[x_i, x_j,x_k]\cdot y_j$, $[x_k, x_j,x_j]\cdot y_i$,
$[x_k, x_j,x_i]\cdot y_j$, respectively, in the expression of $\delta(p_{453})=0$, gives us the following homogeneous system of equation:
\[
\begin{array}{ll}
0=\alpha_{ijik}+\beta_{ijik}-\beta_{jiik} & 0=-\alpha_{kijk}+\beta_{ijkk}-\beta_{jikk}-\beta_{kjik}\\
0=-\alpha_{ijik}-\alpha_{kiji}-\beta_{ijik}+2\beta_{ijki}-\beta_{jiki} & 0=-\alpha_{kijk}-\beta_{jkki}\\
0=\alpha_{kiji}+\beta_{ijik}-\beta_{ijki} & 0=\alpha_{ijjk}+\beta_{ijjk}\\
0=-\alpha_{kiji}-\beta_{jiki}+\beta_{jiik}& 0=-\alpha_{ijjk}-\beta_{jjik}+\beta_{jjki}-\beta_{jikj}\nonumber\\
0=\alpha_{kjik}+\alpha_{kijk}+2\beta_{kjik} & 0=\alpha_{kjji}+\beta_{ijjk}\nonumber\\
0=-\alpha_{kjik}-\beta_{jkki} & 0=-\alpha_{kjji}+\beta_{jjik}-\beta_{jjki}-\beta_{jikj}\end{array}
\]
We have the next relations:
\[
\alpha_{ijik}=\alpha_{kiji}=\beta_{ijki}-\beta_{jiki} \ , \ \beta_{ijik}=\beta_{jiki} \ , \ \beta_{jiik}=\beta_{ijki}
\]
\[
\alpha_{kjik}=\alpha_{kijk}=-\beta_{jkki}=-\beta_{kjik} \ , \ \beta_{ijkk}=\beta_{jikk}
\]
\[
\alpha_{ijjk}=\alpha_{kjji}=-\beta_{ijjk}=-\beta_{jikj} \ , \ \beta_{jjki}=\beta_{jjik}.
\]
Now substituting $\beta_{ijki}$, $\beta_{jiki}$, $\alpha_{kjik}$, $\beta_{ijkk}$, $\beta_{jjki}$, $\alpha_{ijjk}$, respectively,
in the expression of $p_{453}$, we obtain the following constants for $j<i<k$:
\begin{align}
&[y_i, x_j, x_k]\cdot  y_i+[x_i, x_j, y_i]\cdot  y_k+[x_k, x_i, y_j]\cdot  y_i +[y_j, x_i, x_i]\cdot  y_k\nonumber\\
&[y_j, x_i, x_k]\cdot  y_i-[x_i, x_j, y_i]\cdot  y_k-[x_k, x_i, y_j]\cdot  y_i +[y_i, x_j, x_i]\cdot  y_k\nonumber\\
&[x_k, x_j, y_i]\cdot  y_k+[x_k, x_i, y_j]\cdot  y_k -[y_j, x_k, x_k]\cdot  y_i -[y_k, x_j, x_i]\cdot  y_k\nonumber\\
&[y_i, x_j, x_k]\cdot  y_k+[y_j, x_i, x_k]\cdot  y_k \ , \ [y_j, x_j, x_k]\cdot  y_i+[y_j, x_j, x_i]\cdot  y_k \nonumber\\
&[x_i, x_j, y_j]\cdot  y_k+[x_k, x_j, y_j]\cdot  y_i -[y_i, x_j, x_j]\cdot  y_k-[y_j, x_i, x_k]\cdot  y_j\nonumber
\end{align}
Similar arguments for $\delta(p_{454})=0$ give out the following relations:
\[
\alpha_{ijkl}=\alpha_{lkji} \ , \ \alpha_{kijl}=\alpha_{ljik} \ , \ \beta_{ijkl}=\beta_{jilk} \ , \ \beta_{kjil}=\beta_{jkli}
\]
\[
\beta_{jikl}=\beta_{jilk}+\alpha_{lkji}-\alpha_{ljik}=\beta_{ijlk} \ , \ \alpha_{lijk}=-\beta_{jkli}-\alpha_{ljik}=\alpha_{kjil}
\]
and substituting $\alpha_{lkji}$, $\beta_{jkli}$, $\beta_{jilk}$, $\alpha_{ljik}$, in the expression of $p_{454}$,
\[
[x_l, x_k, y_j]\cdot  y_i +[x_i, x_j, y_k]\cdot  y_l+[y_j, x_i, x_k]\cdot  y_l+[y_i, x_j, x_l]\cdot  y_k
\]
\[
[y_j, x_k, x_l]\cdot  y_i+[y_k, x_j, x_i]\cdot  y_l-[x_l, x_i, y_j]\cdot  y_k-[x_k, x_j, y_i]\cdot  y_l 
\]
\[
[y_j, x_i, x_l]\cdot  y_k+[y_i, x_j, x_k]\cdot  y_l +[y_j, x_i, x_k]\cdot  y_l+[y_i, x_j, x_l]\cdot  y_k
\]
\[
[x_l, x_j, y_i]\cdot  y_k+[x_k, x_i, y_j]\cdot  y_l -[y_j, x_i, x_k]\cdot  y_l-[y_i, x_j, x_l]\cdot  y_k-[x_l, x_i, y_j]\cdot  y_k-[x_k, x_j, y_i]\cdot  y_l
\]
are constants, where $j<i<k<l$.

\

\noindent\underline{Case 4} Finally we have to show $(KB_6)^{\delta}=\{0\}$.
Let
\[
p_6=\sum_{i\leq l , \ k<l}\alpha_{ijkl}[y_i, x_j,y_k]\cdot y_l
\]
be a constant of $KB_6$. 
We have $p_6=p_{62}+p_{63}+p_{64}$, 
where
\[
p_{62}=\sum_{j<i}\alpha_{iiji}[y_i, x_i,y_j]\cdot y_i+\alpha_{ijji}[y_i, x_j,y_j]\cdot y_i+\alpha_{jiji}[y_j, x_i,y_j]\cdot y_i+\alpha_{jjji}[y_j, x_j,y_j]\cdot y_i,
\]
\[
p_{63}=\sum_{j<i<k}\left(\begin{array}{ll}
\alpha_{ikji}[y_i, x_k,y_j]\cdot y_i+\alpha_{kijk}[y_k, x_i,y_j]\cdot y_k+\alpha_{kjik}[y_k, x_j,y_i]\cdot y_k\\
+\alpha_{ikjk}[y_i, x_k,y_j]\cdot y_k+\alpha_{iijk}[y_i, x_i,y_j]\cdot y_k+\alpha_{ijjk}[y_i, x_j,y_j]\cdot y_k\\
+\alpha_{jkji}[y_j, x_k,y_j]\cdot y_i+\alpha_{jijk}[y_j, x_i,y_j]\cdot y_k+\alpha_{ijik}[y_i, x_j,y_i]\cdot y_k\\
+\alpha_{jkik}[y_j, x_k,y_i]\cdot y_k+\alpha_{jiik}[y_j, x_i,y_i]\cdot y_k+\alpha_{jjik}[y_j, x_j,y_i]\cdot y_k \end{array}\right),
\]
\[
p_{64}=\sum_{j<i<k<l}\left(\begin{array}{ll}
\alpha_{iljk}[y_i, x_l,y_j]\cdot y_k+\alpha_{ikjl}[y_i, x_k,y_j]\cdot y_l+\alpha_{kijl}[y_k, x_i,y_j]\cdot y_l\\
+\alpha_{kjil}[y_k, x_j,y_i]\cdot y_l+\alpha_{jlik}[y_j, x_l,y_i]\cdot y_k+\alpha_{jkil}[y_j, x_k,y_i]\cdot y_l\\
+\alpha_{jikl}[y_j, x_i,y_k]\cdot y_l+\alpha_{ijkl}[y_i, x_j,y_k]\cdot y_l\end{array}\right).
\]
Arguing as in the previous cases, we get all coefficients are 0 and we are done with the spanning elements. The final step proving that these spanning elements, all written as linear combinations of basis elements of the free metabelian Poisson algebra, is straightforward.
\end{proof}

\end{document}